\documentclass{sn-jnl}

\usepackage{graphicx}%
\usepackage{multirow}%
\usepackage{amsmath,amssymb,amsfonts}%
\usepackage{amsthm}%
\usepackage{mathrsfs}%
\usepackage[title]{appendix}%
\usepackage{xcolor}%
\usepackage{textcomp}%
\usepackage{manyfoot}%
\usepackage{booktabs}%
\usepackage{algorithm}%
\usepackage{algorithmicx}%
\usepackage{algpseudocode}%
\usepackage{listings}%

\usepackage{epstopdf}% To incorporate .eps illustrations using PDFLaTeX, etc.
\usepackage[caption=false]{subfig}% Support for small, `sub' figures and tables

\usepackage{natbib}% Citation support using natbib.sty
%\bibpunct[, ]{(}{)}{;}{a}{}{,}% Citation support using natbib.sty
%\renewcommand\bibfont{\fontsize{10}{12}\selectfont}% Bibliography support using natbib.sty

\usepackage{mathtools}
\mathtoolsset{showonlyrefs=true}

\theoremstyle{plain}% Theorem-like structures provided by amsthm.sty
\newtheorem{theorem}{Theorem}[section]
\newtheorem{lemma}[theorem]{Lemma}
\newtheorem{corollary}[theorem]{Corollary}
\newtheorem{proposition}[theorem]{Proposition}
\newtheorem{definition}[theorem]{Definition}
\newtheorem{remark}[theorem]{Remark}

\pdfminorversion=7

\begin{document}

\title{On some mixtures of the Kies distribution}
%\runtitle{Some composite Kies families}

\author*[1,2]{\fnm{Tsvetelin} \sur{Zaevski}}\email{t\_s\_zaevski@math.bas.bg,  t\_s\_zaevski@abv.bg}

\author[1,3]{\fnm{Nikolay} \sur{Kyurkchiev}}\email{nkyurk@math.bas.bg}

\affil*[1]{\orgdiv{Institute of Mathematics and Informatics}, \orgname{Bulgarian Academy of Sciences}, \orgaddress{\street{Acad. Georgi Bonchev Str., Block 8}, \city{Sofia}, \postcode{1113},  \country{Bulgaria}}}

\affil[2]{\orgdiv{Faculty of Mathematics and Informatics}, \orgname{ Sofia University ”St. Kliment Ohridski”}, \orgaddress{\street{ 5, James Bourchier Blvd.}, \city{Sofia}, \postcode{1164},  \country{Bulgaria}}}

\affil[3]{\orgdiv{Faculty of Mathematics and Informatics}, \orgname{ University of Plovdiv Paisii Hilendarski}, \orgaddress{\street{236, Bulgaria Blvd.}, \city{Plovdiv}, \postcode{4027},  \country{Bulgaria}}}

\abstract{The purpose of this paper is to explore some mixtures of Kies distributions -- discrete and continuous. The last ones are also known as compound distributions. Some conditions for convergence are established. We study the probabilistic properties of these mixtures. Special attention is taken to the so-called Hausdorff saturation. Several particular cases are considered -- bimodal and multimodal distributions, and mixtures based on binomial, geometric,  exponential, gamma, and beta distributions. Some numerical experiments for real-life tasks are provided.}

\keywords{Probability mixtures, Compound distributions, Kies distribution, Exponential distribution, Weibull distribution,  Hausdorff saturation}

\maketitle

\section{Introduction}

Mixing of probability distributions is a powerful tool for enlarging their flexibility and implementability. This idea can be applied to the stochastic processes as well as to the random variables.     The applications of such kind distributions can be found in many real-life areas, such as sociology \cite{maiboroda_miroshnychenko_sugakova_2022}, food industry \cite{NADERI2024115433}, information and communication technologies \cite{https://doi.org/10.1002/adts.202200100,yeleyko2022mixture,LI2023102018,ZHANG2023110352}, engineering \cite{hashempour2022weighted}, nanotechnologies \cite{WANG2024246}, biology \cite{liu2023mixture}, meteorology \cite{silveira2023modelling},   medicine and genetics \cite{https://doi.org/10.1002/sim.9367,https://doi.org/10.1002/wics.1611},   finance \cite{DAMICO2023129335}, economy  and energy industry \cite{DOSSANTOS2024113990,BLASQUES2024105575,YAN2022493},  etc. 

We add a new study devoted to mixtures on the Kies  distribution,   originally defined in  \cite{Kies1958}. It appears as a   fractional-linear transformation $t=\frac{y}{y+1} \Leftrightarrow y=\frac{t}{1-t}$ applied to the Weibull distribution, which implies its wide applicability. An important feature of the Kies distribution is its finite domain -- the interval $\left(0,1\right)$.  If one uses  the  transformation $t=\frac{by+a}{y+1} \Leftrightarrow y=\frac{t-a}{b-t}$, $a<b$, instead of $t=\frac{y}{y+1}$, then the distribution will be stated  on the interval $\left(a,b\right)$ -- see for example  \cite{satheesh2014some,sanku2019moments,kies_corrections}. Several extensions of this distribution are available in the science literature. In  \cite{afify2022power,al2020new,kumar2017exponentiated,kumar2017modified}  the authors propose a power transformation   to define   new families. Some composite distributions  are constructed  in  \cite{zaevski_kyurkchiev_2023} -- see also  \cite{alsubie2021properties,al2021modified}. The distributional properties of the minimum and maximum of several Kies distributions are discussed in \cite{zaevski_kyurkchiev_2024}. Some trigonometric transformations are applied in \cite{zaevski_kyurkchiev_trig_2024}. 

In the present paper, we build new probability  distributions mixing a Kies-style family and  assuming that its parameters  are random variables. We specify the resulting distribution via its cumulative  function defining it as the average of the original ones. We establish some necessary conditions which keep the main characteristics of the initial Kies family. The probabilistic properties of the resulting distributions are obtained. Several special cases are examined in detail -- discrete mixtures (bi- and multi-modal, binomial, geometric) as well as mixtures based on the continuous distributions -- exponential, gamma, and beta. An interesting example that arises is the fact that the standard uniform distribution can be viewed as an exponential Kies mixture.  

Another important task we discuss is the so-called {\it saturation,}  defined as the Hausdorff distance between the cumulative distribution function and a $\Gamma$-shaped curve that connects its endpoints. This definition can be easily extended for distributions stated at a  left-finite domain. The so-established term can be viewed as a measure of the speed of occurrence or as an indicator for a critical point.   We derive a semi-closed form formula for the saturation in the general case and apply it to the particular mixtures mentioned above. In addition, this formula turns to explicit for the exponential mixtures. For some additional studies devoted to the Hausdorf saturation, we refer to \cite{math11224620,zaevski_kyurkchiev_2023,zaevski_kyurkchiev_2024,zaevski_kyurkchiev_trig_2024}.

We apply the derived results  to statistical samples generated from two real-life areas -- the financial industry and the social sphere. The  first example is about the calm and volatile periods for the S\&P 500 index, the second one is about  the unemployment insurance issues. These statistical data exhibit quite different behavior. The density of the first one seems to have an infinitely large initial value, whereas the density of the second one has zero endpoints and one peak. Different kinds of parameters of the Kies distribution and the resulting mixtures can approximate both behaviors.  We calibrate the mentioned above mixtures (bimodal, multimodal, binomial, geometric, exponential, gamma, and beta). The derived results are discussed in detail -- we have to mention that the exponential distribution and its gamma extension  produce very good results.

The paper is structured as follows. We present the base we use later in Section \ref{dist_prop}. The Kies mixtures are defined and examined in Section \ref{kies_mixtures}. The Hausdorff saturation is investigated in Section \ref{hausdorff}. Some particular examples are considered in Section \ref{examples}. The  applications of the Kies mixtures  are discussed in \ref{num_ex}.

\section{Preliminaries}\label{dist_prop}

We shall use the   following notations: a large letter for the cumulative distribution function  (CDF) of a distribution, the over-lined letter  for the complementary cumulative distribution function (CCDF),  the corresponding small letter  for the probability density function (PDF), and  the letter $\psi$ for the moment generating function (MGF). Thus if $F\left(t\right)$ is the CDF, then $\overline F\left(t\right)$, $f\left(t\right)$, and $\psi\left(t\right)$ are the corresponding CCDF, PDF, and MGF, respectively. 

 The  Kies distribution  on the  domain  $\left(0,1\right)$ has  CDF, CCDF, and PDF:

\begin{align}
H\left(t\right)&=1-\exp\left(-\lambda\left(\frac{t}{1-t}\right)^\beta\right)\label{eq_1}\\
\overline H\left(t\right)&=\exp\left(-\lambda\left(\frac{t}{1-t}\right)^\beta\right)\label{eq_1_01_2}\\
h\left(t\right)&=\lambda\beta \frac{t^{\beta-1}}{\left(1-t\right)^{\beta+1}}\exp\left(-\lambda\left(\frac{t}{1-t}\right)^\beta\right).\label{eq_1_01_3}\\
\end{align}

\noindent
The shape of the probability density function is obtained in  proposition 2.1 from \cite{zaevski_kyurkchiev_2023}:

\begin{proposition}\label{prop1}

The  value of the PDF at the right endpoint of the  domain is zero, $h\left(1\right)=0$. Let the function $\alpha\left(t\right)$ for $t\in\left(0,1\right)$ be defined as 

\begin{equation}\label{eq6_13}
\alpha\left(t\right):=\lambda \beta\left(\frac{t}{1-t}\right)^\beta-\left(2 t+\beta-1\right).
\end{equation}

\noindent
The following statements for PDF \eqref{eq_1_01_3} w.r.t. the position of the power  $\beta$ w.r.t.  $1$  hold.

\begin{enumerate}

\item
If $\beta>1$, then  PDF \eqref{eq_1_01_3} is zero in the left domain's endpoint, $h\left(0\right)=0$. Function \eqref{eq6_13} has a unique root for $t\in \left(0,1\right)$, we denote it by $t_2$.  The PDF increases for $t\in \left(0,t_2\right)$ having a maximum for $t=t_2$ and decreases for  $t\in \left(t_2,1\right)$.

\item
If $\beta=1$, then the left limit of the PDF is $h\left(0\right)=\lambda$. If $\lambda\ge 2$, then the PDF is  a decreasing from $\lambda$ to $0$ function. Otherwise, if $\lambda< 2$, then we need  the value $t_2=1-\frac{\lambda}{2}$ -- note that  $t_2\in \left(0,1\right)$. The PDF starts from the value $\lambda$ for $t=0$, increases to a maximum for $t=t_2$, and decreases to zero.

\item
If $\beta<1$, then  $h\left(0\right)=\infty$. The derivative of function \eqref{eq6_13} is

\begin{equation}\label{eq6_14}
\alpha'\left(t\right)=\lambda \beta^2\frac{t^{\beta-1}}{\left(1-t\right)^{\beta+1}}\\-2.
\end{equation}

\noindent
Let  $\overline  t$ be defined as $\overline t :=\frac{1-\beta}{2}$. The PDF is a decreasing function when $\alpha'\left(\overline t\right)\ge 0$. 

Suppose that   $\alpha'\left(\overline t\right)< 0$. In this case,  derivative \eqref{eq6_14} has two roots in the interval $\left(0,1\right)$ -- we denote them  by $\overline t_1$ and $\overline t_2$. If $\alpha\left(\overline t_2\right)\ge 0$, then the PDF decreases  in the whole distribution domain. Otherwise, if $\alpha\left(\overline t_2\right)< 0$, then function \eqref{eq6_13} has two roots in the interval $\left(0,1\right)$ too -- we notate them by $ t_1$ and $ t_2$. The PDF starts from infinity, decreases in the interval $\left(0,t_1\right)$ having a local minimum for $t=t_1$, increases for $t \in \left( t_1,  t_2\right)$ having a local maximum for $t=t_2$, and decreases to zero for $t\in \left(  t_2, 1\right)$.

\end{enumerate}

\end{proposition}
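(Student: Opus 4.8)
The plan is to reduce every claim to the sign of the auxiliary function $\alpha$ defined in \eqref{eq6_13}. The first step is to compute the logarithmic derivative of the PDF. Writing $\ln h(t) = \ln(\lambda\beta) + (\beta-1)\ln t - (\beta+1)\ln(1-t) - \lambda\bigl(\frac{t}{1-t}\bigr)^\beta$ and differentiating, the elementary identity $\frac{d}{dt}\frac{t}{1-t} = \frac{1}{(1-t)^2}$ yields, after combining the first two rational terms over the common denominator $t(1-t)$,
\begin{equation}
\frac{h'(t)}{h(t)} = \frac{1}{t(1-t)}\left[(2t+\beta-1) - \lambda\beta\left(\frac{t}{1-t}\right)^\beta\right] = -\frac{\alpha(t)}{t(1-t)}.
\end{equation}
Since $h(t) > 0$ and $t(1-t) > 0$ on $\left(0,1\right)$, the sign of $h'$ is exactly the opposite of the sign of $\alpha$; in particular $h$ increases where $\alpha < 0$, decreases where $\alpha > 0$, and its interior critical points are precisely the zeros of $\alpha$. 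This single identity converts the entire statement into a sign analysis of $\alpha$.

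The boundary values come from the local expansion $h(t) \sim \lambda\beta\, t^{\beta-1}$ as $t \to 0^+$ (the remaining factors tend to $1$), giving $h(0)=0$ for $\beta>1$, $h(0)=\lambda$ for $\beta=1$, and $h(0)=\infty$ for $\beta<1$; the right endpoint gives $h(1)=0$ since the polynomial prefactor is dominated by the decaying exponential. At the endpoints $\alpha(0^+)=1-\beta$ and $\alpha(1^-)=+\infty$, which anchor the sign analysis.

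The core work is to locate the zeros of $\alpha$ in each regime, and the natural tool is its derivative $\alpha'(t)=\lambda\beta^2 g(t)-2$ with $g(t):=\frac{t^{\beta-1}}{(1-t)^{\beta+1}}$, as recorded in \eqref{eq6_14}. The behaviour of $g$ is governed by $\frac{g'(t)}{g(t)}=\frac{2t+\beta-1}{t(1-t)}$, whose sign changes exactly once, at $\overline t=\frac{1-\beta}{2}$. For $\beta>1$ this root lies outside $\left(0,1\right)$, so $g$, and hence $\alpha'$, is increasing from $-2$ to $+\infty$; thus $\alpha$ is first decreasing then increasing, starts negative at $0^+$, and rises to $+\infty$, producing a single sign change at some $t_2$ — the unique maximum. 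For $\beta=1$ the function $\alpha$ factors as $t\bigl(\frac{\lambda}{1-t}-2\bigr)$, whose sign is read off directly and splits at $\lambda=2$ exactly as stated.

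The delicate case is $\beta<1$, where the analysis is two layers deep. Here $\overline t\in\bigl(0,\tfrac12\bigr)$ is the unique minimiser of $g$, so $\alpha'$ decreases then increases with $\alpha'(0^+)=\alpha'(1^-)=+\infty$. I would branch on the sign of the minimum $\alpha'(\overline t)$: if $\alpha'(\overline t)\ge 0$ then $\alpha'\ge 0$ throughout, so $\alpha$ is nondecreasing from $\alpha(0^+)=1-\beta>0$ and $h$ is monotone decreasing. If $\alpha'(\overline t)<0$, then $\alpha'$ has exactly two roots $\overline t_1<\overline t_2$, making $\alpha$ rise–fall–rise with a local maximum at $\overline t_1$, which is necessarily positive since $\alpha$ increased from $1-\beta>0$, and a local minimum at $\overline t_2$. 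The final dichotomy is then the sign of $\alpha(\overline t_2)$: if it is $\ge 0$ the function $\alpha$ stays nonnegative and $h$ decreases throughout, while if $\alpha(\overline t_2)<0$ the profile of $\alpha$ — positive at $0^+$, a positive local maximum, a negative local minimum, then $+\infty$ — forces exactly two zeros $t_1<t_2$, yielding the decrease–increase–decrease shape. The main obstacle is purely bookkeeping: one must verify that at each of the two nested levels the monotonicity pattern of $g$ and of $\alpha'$ rules out any further sign changes, so that the zero counts are exhaustive and no other configurations arise.
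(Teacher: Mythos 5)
Your proof is correct and follows the approach the statement itself is built around (the paper defers the proof to Proposition 2.1 of the cited reference, but the role assigned there to $\alpha$ and $\alpha'$ makes the intended argument exactly your sign analysis of $h'/h=-\alpha(t)/\bigl(t(1-t)\bigr)$). The identity for the logarithmic derivative, the endpoint asymptotics $h(t)\sim\lambda\beta t^{\beta-1}$, and the nested monotonicity analysis of $g$, $\alpha'$, and $\alpha$ in each of the three regimes all check out, so the case-by-case conclusions are fully justified.
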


Generally said, Proposition \ref{prop1} shows that the PDF may exhibit several forms. In all cases, the right endpoint is zero. Also, it may start from infinity and decrease to zero or have one local minimum and another local maximum. Second, it may start from a finite point and decrease to zero or first to increase to a peak and then to decrease to zero. And finally, the PDF may start from zero, increase to a maximum, and then decrease.

\section{Kies mixtures}\label{kies_mixtures}

We need the following lemma to define the Kies mixtures.

\begin{lemma}

Let  $\lambda$ and $\beta$ be positive random variables on some probability space $\left(\Omega,\mathcal{F},\mathbb{P}\right)$ and  $H\left(t;\lambda,\beta\right)$ be the CDFs of a Kies distributed family. Then the function $F\left(\cdot\right)$, defined as

\begin{equation}\label{eq_2}
F\left(t\right)=\mathbb{E}\left[H\left(t;\lambda,\beta\right)\right],
\end{equation}

\noindent
is continuous and increases from zero to one.
\end{lemma}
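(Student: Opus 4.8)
The plan is to reduce all three assertions --- monotonicity, the two boundary limits, and continuity --- to the pointwise behaviour of the Kies CDF $H(\cdot;\lambda,\beta)$ combined with the dominated convergence theorem, exploiting that on a probability space the constant function $1$ is integrable and dominates $H$ uniformly in the parameters.

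First I would record the relevant pointwise facts for every fixed realization $\lambda=\lambda(\omega)>0$ and $\beta=\beta(\omega)>0$, read off directly from the closed form \eqref{eq_1}. Namely, $t\mapsto H(t;\lambda,\beta)$ is continuous and strictly increasing on $(0,1)$, since its density \eqref{eq_1_01_3} is strictly positive there, and it satisfies $\lim_{t\to 0^+}H(t;\lambda,\beta)=0$ and $\lim_{t\to 1^-}H(t;\lambda,\beta)=1$. These limits hold because $t/(1-t)\to 0$ (respectively $\to\infty$) forces $\lambda\left(t/(1-t)\right)^\beta\to 0$ (respectively $\to\infty$) for any positive $\lambda,\beta$, so the exponential tends to $1$ (respectively $0$). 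As $\lambda,\beta$ are almost surely finite and positive, these statements hold almost surely in $\omega$; measurability of $\omega\mapsto H(t;\lambda(\omega),\beta(\omega))$, needed for the expectation to be defined, is clear since $H$ is a continuous function of $(\lambda,\beta)$.

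Monotonicity of $F$ then follows at once: for $t_1<t_2$ we have $H(t_1;\lambda,\beta)<H(t_2;\lambda,\beta)$ for almost every $\omega$, and because both sides lie in $[0,1]$ the expectations are finite and the strict almost-sure inequality is preserved, giving $F(t_1)<F(t_2)$. For the endpoint values and for continuity I would take an arbitrary sequence $t_n\to t_0$ with $t_0=0$, $t_0=1$, or $t_0\in(0,1)$. In each case $H(t_n;\lambda,\beta)$ converges pointwise (almost surely in $\omega$) to the appropriate limit --- $0$, $1$, or $H(t_0;\lambda,\beta)$ respectively --- and is dominated by the integrable constant $1$. Dominated convergence then permits the interchange of limit and expectation, yielding $F(t_n)\to 0$, $F(t_n)\to 1$, and $F(t_n)\to F(t_0)$, which establishes the boundary limits together with continuity on the interior.

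The only point requiring genuine care --- the main obstacle --- is the legitimacy of exchanging the limit (or the inequality) with the expectation when $\lambda$ and $\beta$ range over all positive values; a priori one might worry about realizations with $\beta$ near $0$ or near $\infty$, where $H$ changes shape drastically (cf. Proposition \ref{prop1}). This is precisely what the uniform bound $0\le H\le 1$ resolves: the dominating function is the constant $1$, integrable on the probability space irrespective of the parameter values, so no uniform control over $(\lambda,\beta)$ is needed and the theorem applies verbatim.
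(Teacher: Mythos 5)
Your proof is correct and follows essentially the same route as the paper: pointwise monotonicity of each $H(\cdot;\lambda,\beta)$ gives monotonicity of $F$, and the dominated convergence theorem with the uniform bound $0\le H\le 1$ (the constant $1$ being integrable on a probability space) gives continuity and the endpoint values. You simply spell out the pointwise limits and the measurability issue in more detail than the paper does.
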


\begin{proof}
Continuity follows from the dominated convergence theorem since $0\le H\left(t;\lambda,\beta\right)\le 1$ for all sample events. As a consequence $F\left(0\right)=0$ and $F\left(1\right)=1$. Function \eqref{eq_2} is increasing since all functions  $H\left(t;\lambda,\beta\right)$ increase.
\end{proof}

\begin{definition}\label{def1}

Let us impose the following conditions on the random variables  $\lambda$ and $\beta$:

\begin{align}
&\mathbb{E}\left[\frac{\beta }{\lambda^{\frac{1}{\beta}}}\left(\frac{\beta+1}{\beta }\right)^{\frac{\beta+1}{\beta}}\right]<\infty\label{eq6_14_1}\\
&\mathbb{E}\left[\lambda\right]<\infty\label{eq6_14_2}\\
&\mathbb{E}\left[\lambda\beta\right]<\infty\label{eq6_14_3}.\\
\end{align}

\noindent
 The mixture distribution is defined by its CDF, $F\left(\cdot\right)$, through formula \eqref{eq_2}.
\end{definition}

\begin{remark}
The importance of these conditions will be seen later when we discuss some particular  mixtures. Note that they are sufficient but not necessary and thus one may impose other requirements which lead to similar results.
\end{remark}

 We need the following lemma before  establishing the result for the mixture PDF.

\begin{lemma}\label{lem2}
Let $a$ and $b$ be positive constants. The function 

\begin{equation}\label{eq_9_1}
g\left(x\right)=x^{b+1}e^{-a x^b}
\end{equation}

\noindent
achieves its maximum at the positive real half-line for  $x=\left(\frac{b+1}{ab}\right)^{\frac{1}{b}}$  and it is 

\begin{equation}\label{eq_9_2}
\left(\frac{b+1}{eab}\right)^{\frac{b+1}{b}}.
\end{equation}

\end{lemma}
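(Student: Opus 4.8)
The plan is to treat this as a routine single-variable optimization over $(0,\infty)$, since $g$ is smooth there while both ends of the half-line drive $g$ to zero. First I would record the boundary behaviour: $g(x)\to 0$ as $x\to 0^+$ because $b+1>0$, and $g(x)\to 0$ as $x\to\infty$ because the exponential factor $e^{-ax^b}$ decays faster than the polynomial $x^{b+1}$ grows (positivity of both $a$ and $b$ is essential here). Consequently any global maximum on the positive half-line must be attained at an interior stationary point, so it suffices to locate and classify the critical points.

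Next I would differentiate using the product rule and factor out the common positive term $x^b e^{-ax^b}$, which yields
\begin{equation*}
g'(x)=x^b e^{-ax^b}\bigl[(b+1)-ab\,x^b\bigr].
\end{equation*}
On $(0,\infty)$ the leading factor is strictly positive, so the sign of $g'$ is governed entirely by the bracket $(b+1)-ab\,x^b$. This bracket is a strictly decreasing function of $x$ that is positive for small $x$ and negative for large $x$, vanishing precisely when $x^b=\frac{b+1}{ab}$, that is at $x=\left(\frac{b+1}{ab}\right)^{1/b}$. Hence $g$ increases and then decreases, which identifies this unique critical point as the global maximizer.

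Finally I would evaluate $g$ at this point. Substituting $x^b=\frac{b+1}{ab}$ turns the exponential factor into $e^{-(b+1)/b}$, while $x^{b+1}=(x^b)^{(b+1)/b}=\left(\frac{b+1}{ab}\right)^{(b+1)/b}$; multiplying the two factors and absorbing $e^{-(b+1)/b}$ into the base produces the claimed maximum value $\left(\frac{b+1}{eab}\right)^{(b+1)/b}$ from \eqref{eq_9_2}.

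There is no genuine obstacle in this argument — it is elementary calculus. The only points that warrant a little care are justifying that the supremum is actually achieved (ruling out escape to the endpoints, which the limit computation settles) and keeping the exponent bookkeeping straight in the final substitution, so that the $1/b$ and $(b+1)/b$ powers combine correctly.
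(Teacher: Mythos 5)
Your proposal is correct and follows the same route as the paper, which simply records the factored derivative $g'(x)=x^{b}e^{-ax^{b}}\left(b+1-abx^{b}\right)$ and leaves the rest implicit. You supply the standard details the paper omits (boundary limits, sign analysis of the bracket, and the substitution $x^{b}=\frac{b+1}{ab}$ giving the stated maximum value), and all of these check out.
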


\begin{proof}
The proof follows the  presentation of the derivative of function \eqref{eq_9_1}

\begin{equation}\label{eq_9_3}
g'\left(x\right)=x^{b}e^{-a x^b}\left(b+1-abx^b\right).
\end{equation}
\end{proof}

\begin{corollary}\label{cor1}

Let $t\in\left(0,1\right]$. We have for the mixture  CCDF and PDF, $\overline F\left(t\right)$ and $f\left(t\right)$ respectively: 

\begin{align}
\overline F\left(t\right)&=\mathbb{E}\left[\overline H\left(t;\lambda,\beta\right)\right]\label{eq_5_1}\\
f\left(t\right)&=\mathbb{E}\left[h\left(t;\lambda,\beta\right)\right].\label{eq_5}\\
\end{align}

\end{corollary}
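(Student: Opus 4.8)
The plan is to treat the two identities separately, the first being immediate and the second requiring an interchange of expectation with a limit. For the complementary distribution I would simply use linearity of the expectation together with $\overline H\left(\cdot;\lambda,\beta\right)=1-H\left(\cdot;\lambda,\beta\right)$ from \eqref{eq_1} and \eqref{eq_1_01_2}: since $F\left(t\right)=\mathbb{E}\left[H\left(t;\lambda,\beta\right)\right]$ by \eqref{eq_2} and $\mathbb{E}\left[1\right]=1$, we obtain $\overline F\left(t\right)=1-F\left(t\right)=\mathbb{E}\left[1-H\left(t;\lambda,\beta\right)\right]=\mathbb{E}\left[\overline H\left(t;\lambda,\beta\right)\right]$, which is \eqref{eq_5_1}. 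No integrability hypothesis is needed here.

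For the density \eqref{eq_5} the natural route is to show $F\left(t\right)=\int_0^t\mathbb{E}\left[h\left(s;\lambda,\beta\right)\right]ds$ and then differentiate. For every fixed realization of $\left(\lambda,\beta\right)$ the function $h\left(\cdot;\lambda,\beta\right)$ from \eqref{eq_1_01_3} is the density of $H\left(\cdot;\lambda,\beta\right)$, hence $H\left(t;\lambda,\beta\right)=\int_0^t h\left(s;\lambda,\beta\right)ds$. Taking expectations and exchanging $\mathbb{E}$ with $\int_0^t$ by Tonelli's theorem -- legitimate because $h\ge 0$, so no moment condition is required for the swap itself -- yields $F\left(t\right)=\int_0^t\mathbb{E}\left[h\left(s;\lambda,\beta\right)\right]ds$. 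This representation simultaneously exhibits $f\left(s\right):=\mathbb{E}\left[h\left(s;\lambda,\beta\right)\right]$ as a nonnegative density and, letting $t\to 1$, confirms $\int_0^1 f=1$. The fundamental theorem of calculus then gives $F'\left(t\right)=f\left(t\right)$ at every point where $f$ is continuous, so it remains to check that $f$ is finite and continuous on $\left(0,1\right]$; equivalently, one may differentiate \eqref{eq_2} directly under the expectation via dominated convergence, for which the same bounds are needed.

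The finiteness and domination estimate is where Lemma \ref{lem2} and the conditions \eqref{eq6_14_1}--\eqref{eq6_14_3} enter. Substituting $x=\frac{t}{1-t}$ I would rewrite \eqref{eq_1_01_3} as $h=\lambda\beta\left(x^{\beta-1}+2x^\beta+x^{\beta+1}\right)e^{-\lambda x^\beta}$. For $t\ge\frac12$ (that is $x\ge1$) the bracket is dominated by $4x^{\beta+1}$, and Lemma \ref{lem2} with $a=\lambda$, $b=\beta$ bounds $\lambda\beta x^{\beta+1}e^{-\lambda x^\beta}$ by its maximum $\lambda\beta\left(\frac{\beta+1}{e\lambda\beta}\right)^{\frac{\beta+1}{\beta}}$, which equals $e^{-\frac{\beta+1}{\beta}}$ times the integrand of \eqref{eq6_14_1} and is therefore integrable. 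For $t<\frac12$ (that is $x<1$) the bracket is dominated by $4x^{\beta-1}\le 4x^{-1}$ while $e^{-\lambda x^\beta}\le 1$, giving $h\le\frac{4}{x}\lambda\beta$, integrable by \eqref{eq6_14_3}; condition \eqref{eq6_14_2} is what keeps the left-endpoint value finite in the borderline regime $\beta=1$. Dominated convergence on compact $t$-subintervals then provides both the continuity of $f$ and the interchange, completing \eqref{eq_5}.

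The main obstacle is that these dominating bounds cannot be made uniform over the whole domain: by Proposition \ref{prop1}, $h\left(0^+;\lambda,\beta\right)=\infty$ whenever $\beta<1$, so no single integrable majorant controls $h$ near $t=0$. The argument must therefore be localized to compact subintervals of $\left(0,1\right)$, or routed entirely through the Tonelli representation of $F$, which sidesteps the pointwise blow-up. The one genuinely delicate computation is matching the majorant to the exact algebraic form of \eqref{eq6_14_1}; recognizing that the extremal value supplied by Lemma \ref{lem2} is, up to the harmless factor $e^{-\frac{\beta+1}{\beta}}<1$, precisely the integrand appearing in that hypothesis is what makes the conditions of Definition \ref{def1} the right ones.
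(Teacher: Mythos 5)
Your proof is correct, and its essential engine is the same as the paper's: the domination of $h\left(t;\lambda,\beta\right)$ obtained from Lemma \ref{lem2} with $a=\lambda t^{\beta}$ (equivalently $a=\lambda$ after your substitution $x=\frac{t}{1-t}$), whose extremal value is, up to the factor $e^{-\frac{\beta+1}{\beta}}$, exactly the integrand of condition \eqref{eq6_14_1}. Where you differ is in how the differentiation under the expectation is packaged: the paper writes $F'\left(t\right)$ as a limit of difference quotients, applies the mean value theorem to replace each quotient by $h\left(\tau\left(\epsilon\right);\lambda,\beta\right)$, and then invokes dominated convergence with the single majorant $\frac{\beta e^{-\frac{\beta+1}{\beta}}}{t^{2}\lambda^{1/\beta}}\left(\frac{\beta+1}{\beta}\right)^{\frac{\beta+1}{\beta}}$ (locally uniform since $t^{-2}$ is bounded on compact subintervals), whereas your primary route goes through Tonelli, $F\left(t\right)=\int_{0}^{t}\mathbb{E}\left[h\left(s;\lambda,\beta\right)\right]ds$, and the fundamental theorem of calculus, splitting the majorant into the regimes $x\ge 1$ (controlled by \eqref{eq6_14_1}) and $x<1$ (controlled by \eqref{eq6_14_3} after localizing away from $t=0$). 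The Tonelli representation buys you an almost-everywhere identification of the density with no continuity argument at all, which is a pleasant shortcut, while the paper's single majorant avoids the case split; both correctly recognize that the bound cannot be uniform down to $t=0$ and must be localized. Your aside attributing a role to \eqref{eq6_14_2} in the regime $\beta=1$ concerns Proposition \ref{prop_left} rather than this corollary, but it does no harm.
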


\begin{proof}
The equality $\overline F\left(t\right)=\mathbb{E}\left[\overline H\left(t;\lambda,\beta\right)\right]$ is obvious. Next, we prove the statement  for the PDF. We can write

\begin{equation}\label{eq_9}
\begin{split}
 F'\left(t\right)&=\lim\limits_{\epsilon\to 0}{\frac{\mathbb{E}\left[H\left(t+\epsilon;\lambda,\beta\right)\right]-\mathbb{E}\left[H\left(t;\lambda,\beta\right)\right]}{\epsilon}}\\
&=\lim\limits_{\epsilon\to 0}{\mathbb{E}\left[\frac{H\left(t+\epsilon;\lambda,\beta\right)-H\left(t;\lambda,\beta\right)}{\epsilon}\right]}\\
&=\lim\limits_{\epsilon\to 0}{\mathbb{E}\left[h\left(\tau\left(\epsilon\right);\lambda,\beta\right)\right]}\\
\end{split}
\end{equation}

\noindent
for some $\tau\left(\epsilon\right)\in\left(t,t+\epsilon\right)$ due to the  mean value theorem.  Using Lemma \ref{lem2} for $a=\lambda t^\beta$, $b=\beta$, and $x=\frac{1}{1-t}$ we obtain

\begin{equation}\label{eq_8}
\begin{split}
\mathbb{E}\left[\left|h\left(t;\lambda,\beta\right)\right|\right]&=\mathbb{E}\left[\lambda\beta \frac{t^{\beta-1}}{\left(1-t\right)^{\beta+1}}\exp\left(-\lambda\left(\frac{t}{1-t}\right)^\beta\right)\right]\\
&\le \mathbb{E}\left[\lambda \beta t^{\beta-1}\left(\frac{\beta+1}{e\lambda\beta t^\beta}\right)^{\frac{\beta+1}{\beta}}\right]\\
&\le \mathbb{E}\left[\frac{\beta\exp\left(-\frac{\beta+1}{\beta }\right) }{t^2\lambda^{\frac{1}{\beta}}}\left(\frac{\beta+1}{\beta }\right)^{\frac{\beta+1}{\beta}}\right].
\end{split}
\end{equation}

\noindent
Hence $\mathbb{E}\left[\left|h\left(t;\lambda,\beta\right)\right|\right]<\infty$  in a neighborhood of every point from the interval  $t\in\left(0,1\right)$ due to condition \eqref{eq6_14_1}. We can apply now the dominated convergence theorem to obtain the desired result for the PDF in formulas \eqref{eq_5}. 
\end{proof}

\begin{corollary}\label{MGF}

Suppose that the random variable $\beta$ is deterministic. The CCDF of a Kies mixture can be derived as the MGF of the random variable $\lambda$ taken at the point $-\left(\frac{t}{1-t}\right)^\beta$

\begin{equation}\label{eq_8_1}
\overline F\left(t\right)=\psi_\lambda\left(-\left(\frac{t}{1-t}\right)^\beta\right).
\end{equation}

\noindent
As a consequence, the PDF turns to

\begin{equation}\label{eq_8_2}
f\left(t\right)=\beta\frac{t^{\beta-1}}{\left(1-t\right)^{\beta+1}}\psi_\lambda '\left(-\left(\frac{t}{1-t}\right)^\beta\right).
\end{equation}

\end{corollary}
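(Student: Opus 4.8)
The plan is to read both identities directly off the expectation formulas already supplied by Corollary \ref{cor1}, treating $\beta$ as a constant; the only genuine analytic ingredient is the standard differentiability of a moment generating function inside its domain of finiteness.

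First I would establish \eqref{eq_8_1}. By \eqref{eq_5_1} together with CCDF \eqref{eq_1_01_2}, and because $\beta$ is now deterministic, $\overline F\left(t\right)=\mathbb{E}\left[\exp\left(-\lambda\left(\frac{t}{1-t}\right)^\beta\right)\right]$. Writing $s:=-\left(\frac{t}{1-t}\right)^\beta$, this expectation is literally $\mathbb{E}\left[e^{s\lambda}\right]=\psi_\lambda\left(s\right)$, which is the asserted formula. At the same time I would record that $s<0$ for $t\in\left(0,1\right)$, so that, since $\lambda>0$ forces $e^{s\lambda}\le 1$, the point $s$ lies in the interior of the set on which $\psi_\lambda$ is finite; this is exactly what will legitimise the differentiation in the next step.

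Next I would turn to the PDF formula \eqref{eq_8_2}. Starting from \eqref{eq_5} and the Kies PDF \eqref{eq_1_01_3}, I would pull the deterministic factor $\beta\,t^{\beta-1}\left(1-t\right)^{-\beta-1}$ out of the expectation, which leaves $f\left(t\right)=\beta\frac{t^{\beta-1}}{\left(1-t\right)^{\beta+1}}\,\mathbb{E}\left[\lambda e^{s\lambda}\right]$ with the same $s$ as above. It then remains only to recognise $\mathbb{E}\left[\lambda e^{s\lambda}\right]$ as $\psi_\lambda'\left(s\right)$, after which substituting back produces \eqref{eq_8_2}.

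That final identification, the interchange $\psi_\lambda'\left(s\right)=\mathbb{E}\left[\lambda e^{s\lambda}\right]$, is the one point I expect to require care, and it is really the whole content of the corollary beyond the bookkeeping above. I would justify it through the classical fact that a moment generating function is $C^\infty$ (indeed real-analytic) throughout the interior of its finiteness domain, its derivatives being obtained by differentiating under the expectation. Concretely, since $x\mapsto xe^{sx}$ is bounded on $\left[0,\infty\right)$ for every fixed $s<0$, a dominated-convergence estimate applied to the difference quotients $\left(e^{\left(s+\epsilon\right)\lambda}-e^{s\lambda}\right)/\epsilon$, valid uniformly for small $\epsilon$ keeping $s+\epsilon<0$, delivers the interchange and thereby completes the proof.
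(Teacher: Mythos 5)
Your argument is correct and follows essentially the same route as the paper, whose proof simply cites formulas \eqref{eq_1_01_2} and \eqref{eq_5_1} (with \eqref{eq_5} and \eqref{eq_1_01_3} implicitly giving the PDF part). You merely make explicit the one step the paper leaves tacit, namely that $\psi_\lambda'\left(s\right)=\mathbb{E}\left[\lambda e^{s\lambda}\right]$ for $s<0$, which your dominated-convergence justification handles correctly since $\lambda>0$.
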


\begin{proof}
The corollary follows formulas \eqref{eq_1_01_2} and \eqref{eq_5_1}.
\end{proof}

Obviously the shape of the mixture PDF is closely related to the PDFs of the Kies family as well as to the  random variables $\lambda$ and $\beta$. Nonetheless, we can prove the following proposition which characterizes the endpoints of the PDF.

\begin{proposition}\label{prop_left}

  The right PDF endpoint is $f\left(1\right)=0$. The left endpoint can be derived via the following alternatives. Note that we shall use the symbol $\mathbb{E}$  for the expectation w.r.t. the measure $\mathbb{P}$.

\begin{enumerate}

\item

If $\mathbb{P}\left(\beta>1\right)=1$, then $f\left(0\right)=0$.

\item
If $\mathbb{P}\left(\beta=1\right)>0$ and $\mathbb{P}\left(\beta<1\right)=0$, then  $f\left(0\right)=\mathbb{Q}\left(\beta=1\right)\mathbb{E}\left[\lambda\right]$. The probability measure $\mathbb{Q}$ is equivalent to $\mathbb{P}$ with Radon-Nikodym derivative 

\begin{equation}\label{eq_10}
\frac{d\mathbb{Q}}{d\mathbb{P}}=\frac{\lambda}{\mathbb{E}\left[\lambda\right]}.
\end{equation}

\noindent
Note that the measure $\mathbb{Q}$ really exists since the random variable $\lambda$ is positive and $\mathbb{E}\left[\lambda\right]<\infty$ due to condition \eqref{eq6_14_2}. 

\item

If $\mathbb{P}\left(\beta<1\right)>0$, then $f\left(0\right)=\infty$.

\end{enumerate}

\end{proposition}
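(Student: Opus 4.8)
The plan is to treat both endpoints as one-sided limits, $f(0)=\lim_{t\to0^+}f(t)$ and $f(1)=\lim_{t\to1^-}f(t)$, and to pass these limits through the expectation in \eqref{eq_5}. For a fixed realisation of $(\lambda,\beta)$ the endpoint behaviour of $h(\cdot;\lambda,\beta)$ is exactly what Proposition \ref{prop1} records: the right limit is always $0$, while the left limit equals $0$ when $\beta>1$, equals $\lambda$ when $\beta=1$, and is $+\infty$ when $\beta<1$. Hence, once the interchange of limit and expectation is justified, the three alternatives follow by integrating the pointwise limit $\lim_{t\to0^+}h(t;\lambda,\beta)$ over $\mathbb{P}$. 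The whole proof therefore reduces to producing, on a neighbourhood of each endpoint, an integrable dominating function so that the dominated convergence theorem applies (and, in the divergent case, to replacing it by Fatou's lemma).

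For the right endpoint I would reuse the bound already obtained in \eqref{eq_8}: on $t\in[\tfrac12,1)$ one has $t^{-2}\le 4$ and $\exp(-\tfrac{\beta+1}{\beta})\le1$, so $h(t;\lambda,\beta)\le 4\beta\lambda^{-1/\beta}\bigl(\tfrac{\beta+1}{\beta}\bigr)^{(\beta+1)/\beta}$, which is $\mathbb{P}$-integrable by condition \eqref{eq6_14_1}. Since $h(t;\lambda,\beta)\to0$ pointwise as $t\to1^-$, dominated convergence yields $f(1)=0$. For the left endpoint the bound \eqref{eq_8} is useless because of its $t^{-2}$ factor, so the key technical step is to find a different majorant valid near $0$. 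Here I would rewrite $h(t;\lambda,\beta)=\lambda\beta\bigl(\tfrac{t}{1-t}\bigr)^{\beta-1}(1-t)^{-2}\exp(-\lambda(\tfrac{t}{1-t})^\beta)$ and note that, whenever $\beta\ge1$ and $t\in(0,\tfrac13]$ (so that $\tfrac{t}{1-t}\le\tfrac12<1$), one has $(\tfrac{t}{1-t})^{\beta-1}\le1$, $(1-t)^{-2}\le\tfrac94$ and the exponential $\le1$; consequently $h(t;\lambda,\beta)\le\tfrac94\lambda\beta$, which is $\mathbb{P}$-integrable by condition \eqref{eq6_14_3}.

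With these majorants in hand the three cases are immediate. In case 1 ($\beta>1$ a.s.) the pointwise left limit of $h$ is $0$, so dominated convergence gives $f(0)=0$. In case 2 ($\beta\ge1$ a.s. with $\mathbb{P}(\beta=1)>0$) the pointwise left limit is $\lambda\,\mathbf{1}_{\{\beta=1\}}$, whence $f(0)=\mathbb{E}[\lambda\,\mathbf{1}_{\{\beta=1\}}]$; using the change of measure \eqref{eq_10} this equals $\mathbb{E}\bigl[\tfrac{\lambda}{\mathbb{E}[\lambda]}\mathbf{1}_{\{\beta=1\}}\bigr]\,\mathbb{E}[\lambda]=\mathbb{Q}(\beta=1)\,\mathbb{E}[\lambda]$, as claimed. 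In case 3 ($\mathbb{P}(\beta<1)>0$) I would drop domination and invoke Fatou's lemma: since $h(t;\lambda,\beta)\ge0$ and its left limit is $+\infty$ on the set $\{\beta<1\}$ of positive $\mathbb{P}$-measure, $\liminf_{t\to0^+}\mathbb{E}[h]\ge\mathbb{E}[\liminf_{t\to0^+}h]=+\infty$, so $f(0)=\infty$.

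The main obstacle is the construction of the dominating function near $t=0$: the naive estimate $(1-t)^{-(\beta+1)}\le(1-t_0)^{-(\beta+1)}$ grows exponentially in $\beta$ and cannot be controlled by the stated moment conditions. The remedy is the regrouping $\tfrac{t^{\beta-1}}{(1-t)^{\beta+1}}=\bigl(\tfrac{t}{1-t}\bigr)^{\beta-1}(1-t)^{-2}$, which confines the entire $\beta$-dependence to the factor $(\tfrac{t}{1-t})^{\beta-1}$, uniformly bounded by $1$ once $\beta\ge1$ and $t$ is small. A secondary point worth stating carefully is why condition \eqref{eq6_14_3} (rather than \eqref{eq6_14_1}) is the relevant one near the left endpoint, and why the interchange fails precisely when $\mathbb{P}(\beta<1)>0$, which is exactly the content of the trichotomy.
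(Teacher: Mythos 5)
Your argument is correct, and its skeleton coincides with the paper's: split on the position of $\beta$ relative to $1$, read off the pointwise endpoint behaviour of $h(\cdot;\lambda,\beta)$ from Proposition \ref{prop1}, and justify interchanging the limit $t\to 0^+$ (resp. $t\to 1^-$) with the expectation in \eqref{eq_5}. The right endpoint is handled identically (bound \eqref{eq_8} plus dominated convergence). Where you genuinely diverge is in the two technical steps at the left endpoint. First, you exhibit an explicit $t$-uniform majorant near $0$ on the event $\{\beta\ge 1\}$, namely $h(t;\lambda,\beta)\le \tfrac{9}{4}\lambda\beta$ for $t\in(0,\tfrac13]$ via the regrouping $\tfrac{t^{\beta-1}}{(1-t)^{\beta+1}}=\bigl(\tfrac{t}{1-t}\bigr)^{\beta-1}(1-t)^{-2}$; the paper only asserts that condition \eqref{eq6_14_3} ``allows us to take the limit'' without producing the dominating function, so your version makes precise exactly why $\mathbb{E}[\lambda\beta]<\infty$ is the relevant hypothesis there. (The paper also routes the vanishing of the $\{\beta\le 1\}$ contribution in case 1 through an auxiliary measure $\mathbb{L}$, which your direct pointwise-limit argument renders unnecessary.) Second, for the divergent case the paper constructs the truncation sets $\Omega_{t,M}$ and lets $M\to\infty$ after a continuity-of-measure argument, whereas you invoke Fatou's lemma on the nonnegative integrand; the two are equivalent in substance, but Fatou is shorter and avoids the slightly awkward definition of $\Omega_{t,M}$. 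In short: same strategy, but your execution of the interchange is more explicit in case 1--2 and more economical in case 3.
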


\begin{proof}
The value $f\left(1\right)=0$ can be derived via inequality \eqref{eq_8} and the dominated convergence theorem. Let us turn to the left endpoint. We can rewrite formula \eqref{eq_5} as

\begin{equation}\label{eq_11}
\begin{split}
f\left(t\right)&=\mathbb{E}\left[h\left(t;\lambda,\beta\right)I_{\beta<1}\right]+\mathbb{E}\left[h\left(t;\lambda,\beta\right)I_{\beta=1}\right]+\mathbb{E}\left[h\left(t;\lambda,\beta\right)I_{\beta>1}\right].
\end{split}
\end{equation}

Suppose first that $\mathbb{P}\left(\beta>1\right)=1$. Let us define the measure $\mathbb{L}$ for a fixed $t$ via the Radon-Nikodym derivative 

\begin{equation}\label{eq_12}
\frac{d\mathbb{L}}{d\mathbb{P}}=\frac{h\left(t;\lambda,\beta\right)}{\mathbb{E}\left[h\left(t;\lambda,\beta\right)\right]}.
\end{equation}
 
\noindent
Note that for every $t\in\left(0,1\right]$ the expectation $\mathbb{E}\left[h\left(t;\lambda,\beta\right)\right]$ is finite due to inequality  \eqref{eq_8}. Hence the sum of the first and second  expectations   from formula \eqref{eq_11} can be obtained as

\begin{equation}\label{eq_13}
\begin{split}
\mathbb{E}\left[h\left(t;\lambda,\beta\right)I_{\beta\le 1}\right]=\mathbb{E}\left[h\left(t;\lambda,\beta\right)\right]\mathbb{L}\left(\beta\le 1\right)=0,
\end{split}
\end{equation}

\noindent
because the measures $\mathbb{L}$ and $\mathbb{P}$ are equivalent and $\mathbb{P}\left(\beta\le 1\right)=0$. Inequality \eqref{eq6_14_3} allows us to take  the limit $t\to 0$ in the third  expectation from formula \eqref{eq_11} and using the first statement of proposition \ref{prop1} to derive $f\left(0\right)=0$.

Suppose now that $\mathbb{P}\left(\beta=1\right)>0$ and $\mathbb{P}\left(\beta<1\right)=0$.  Analogously as above we can derive that the values of the first and third expectations from formula \eqref{eq_11} are  zero. We derive the desired by result changing the measure  from $\mathbb{P}$ to $\mathbb{Q}$ in the second expectation.

It left to consider the case $\mathbb{P}\left(\beta<1\right)>0$. The third statement of proposition \ref{prop1} shows that $h\left(t;\lambda\left(\omega\right),\beta\left(\omega\right)\right)$  tends to infinity for $t\to 0$ for every sample event $\omega$ such that $\beta\left(\omega\right)<1$. Let $t\in\left(0,1\right]$ and $M$ be a positive constant. We define the set $\Omega_{t,M}$ as 

\begin{equation}\label{eq_14}
\Omega_{t,M}=\left\{\omega\in\Omega: \left(\beta<1\right) \& \left(h\left(t;\lambda,\beta\right)>M\ \forall\ u< t\right) \right\}.
\end{equation}

\noindent
Hence,

\begin{equation}\label{eq_15}
\mathbb{E}\left[h\left(t;\lambda,\beta\right)I_{\beta< 1}\right]\ge M\mathbb{P}\left(\Omega_{t,M}\right).
\end{equation}

\noindent
Having in mind that $\lim_{t\to 0} \Omega_{t,M}= \left\{\omega:\ \beta<1\right\}$, we see that \eqref{eq_15} leads to

\begin{equation}\label{eq_16}
\lim\limits_{t\to 0}\mathbb{E}\left[h\left(t;\lambda,\beta\right)I_{\beta< 1}\right]\ge M\mathbb{P}\left(\beta<1\right).
\end{equation}

\noindent
We conclude that the third expectation of formula \eqref{eq_11} tends to infinity because  inequality \eqref{eq_16} holds for all constants $M$ and $\mathbb{P}\left(\beta<1\right)>0$. This finishes the proof.
\end{proof}

\begin{remark}
We can define the mixed Kies distribution jointly with the pair $\left(\lambda,\beta\right)$ on the probability space $\left(\Omega_,\mathcal{F},\mathbb{P}\right)$. Let the measure $\mu\left(dx_1,dx_2\right)$ on $\mathbb{R}^+\times \mathbb{R}^+$ be associated with the random variables $\left(\lambda,\beta\right)$ and $\xi$ to stand for the mixture Kies random variable. We  define the triple $\left(\xi,\lambda,\beta\right)$ via the joint distribution

\begin{equation}\label{eq_17}
\mathbb P\left(\xi\in A,\lambda\in{B_1},\beta\in{B_2}\right)=\int\limits_{t\in A}{\int\limits_{x\in B}{h\left(t,x_1,x_2\right)\mu\left(dx_1,dx_2\right)}dt}
\end{equation}

\noindent
for arbitrary subsets $A$ and $B$   of the sets $\left(0,1\right)$ and $\mathbb{R}^+\times \mathbb{R}^+$.    We can easily check that formula \eqref{eq_2} holds:

\begin{equation}\label{eq_18}
\begin{split}
F\left(t\right)&=\mathbb{P}\left(\xi<t\right)\\
&=\int\limits_{0}^t{\int\limits_{x\in\left\{ \mathbb{R}^+\times \mathbb{R}^+\right\}}{h\left(t,x_1,x_2\right)\mu\left(dx_1,dx_2\right)}dt}\\
&=\int\limits_{x\in\left\{ \mathbb{R}^+\times \mathbb{R}^+\right\}}{\left(\int\limits_{0}^t{h\left(t,x_1,x_2\right)}dt\right)\mu\left(dx_1,dx_2\right)}\\
&=\mathbb{E}\left[H\left(t;\lambda,\beta\right)\right].
\end{split}
\end{equation}

\noindent
We have used above Fubini's theorem to interchange the order of integration. Note that the defined in that way  random variable $\xi$ is independent of the pair $\left(\lambda,\beta\right)$ only when $\lambda$ and $\beta$ are constants.
\end{remark}

\section{Saturation in the Hausdorff sense}\label{hausdorff}

The Hausdorff distance can be defined  in the sense of \cite{sendov1990hausdorff}:

\begin{definition}\label{def_hausdorff}
Let us consider the max-norm in $\mathbb{R}^{2}$:  if $A$ and $B$ are the points $A=\left(t_A, x_A\right)$ and   $B=\left(t_B, x_B\right)$, then $\lVert A - B \rVert := \max \left\{\left |t_A - t_B \right|, \left|x_A - x_B\right|\right\}$.  The Hausdorff distance $d\left(g,h\right)$ between two   curves $g$ and $h$ in $\mathbb R^2$ is 

\begin{equation}\label{eqq_1}
d \left( g,h\right) := \max \left\{ \sup_{A\in g}{ \inf_{B\in h} {\lVert A-B\rVert}},\sup_{B\in h} \inf_{A\in g} \lVert A-B\rVert \right\}.
\end{equation}

\end{definition}

The Hausdorff distance can be viewed as the highest  optimal path between the curves.  Next we  define  the {\it saturation} of a distribution:

\begin{definition}\label{sat}
Let $F\left(\cdot\right)$ be the CDF of a distribution stated on the interval  $\left(0,1\right)$. Its saturation   is the Hausdorff distance between the completed graph of $F\left(\cdot\right)$ and the curve consisting of two lines -- one vertical between the points $\left(0,0\right)$ and $\left(0,1\right)$ and another horizontal between the points $\left(0,1\right)$ and $\left(1,1\right)$.
 
\end{definition}

The following corollary holds  for the  saturation. 

\begin{corollary}\label{corr_sat}
The saturation $d$ of a  mixed-Kies random variable   is the unique solution of the equation

\begin{equation}\label{eqq_2_0}
F\left(d\right)+d=1.
\end{equation}

\end{corollary}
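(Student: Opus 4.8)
The plan is to compute the two one-sided distances appearing in \eqref{eqq_1} separately, exploiting that $F$ is continuous and increasing from $0$ to $1$ (the first lemma of Section \ref{kies_mixtures}). Write $g=\{(t,F(t)):t\in[0,1]\}$ for the completed graph and split the $\Gamma$-shaped curve into $L_1=\{(0,y):y\in[0,1]\}$ and $L_2=\{(x,1):x\in[0,1]\}$. First I would fix $A=(t,F(t))\in g$ and evaluate its max-norm distance to each segment: taking the second coordinate equal to $F(t)$ shows the distance to $L_1$ is $t$, and taking the first coordinate equal to $t$ shows the distance to $L_2$ is $1-F(t)$; neither bound can be beaten, since the forced coordinate gap is $t$ for $L_1$ and $1-F(t)$ for $L_2$. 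Hence $\inf_{B}\lVert A-B\rVert=\min\{t,\,1-F(t)\}$.

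Next I would maximise this over $t$. Because $t\mapsto t$ increases from $0$ to $1$ while $t\mapsto 1-F(t)$ decreases from $1$ to $0$, their pointwise minimum rises up to the unique crossing point and falls afterwards; the crossing is precisely the $d$ solving $t=1-F(t)$, i.e. equation \eqref{eqq_2_0}, where the common value equals $d$. For $t\le d$ one has $1-F(t)\ge 1-F(d)=d\ge t$, and for $t\ge d$ one has $1-F(t)\le d\le t$, so $\min\{t,1-F(t)\}\le d$ everywhere with equality at $t=d$. This gives $\sup_{A\in g}\inf_{B}\lVert A-B\rVert=d$, and it simultaneously settles uniqueness: $t\mapsto F(t)+t$ is continuous and strictly increasing from $0$ to $2$, so \eqref{eqq_2_0} has exactly one root.

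For the reverse direction I would only need $\sup_{B}\inf_{A\in g}\lVert A-B\rVert\le d$, which I would obtain by exhibiting, for each $B$ on the $\Gamma$-curve, a graph point within distance $d$. For $B=(0,y)\in L_1$: if $y\le F(d)=1-d$ choose $s\le d$ with $F(s)=y$ by the intermediate value theorem, and if $y>1-d$ choose $s=d$; in both cases $\max\{s,\,|F(s)-y|\}\le d$. Symmetrically, for $B=(x,1)\in L_2$: if $x\ge d$ take $s=x$, and if $x<d$ take $s=d$, and using $F(d)=1-d$ one checks $\max\{|x-s|,\,1-F(s)\}\le d$. Since the Hausdorff distance in \eqref{eqq_1} is the maximum of the two one-sided suprema, it equals $\max\{d,\,\le d\}=d$, which is the claim.

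The step demanding the most care is this reverse bound: unlike the first direction, the nearest graph point to a point of the $\Gamma$-curve is not immediate, and the argument must split according to whether the target height $y$ (resp. abscissa $x$) lies below or above the critical level $F(d)=1-d$. The monotonicity of $F$ is exactly what makes the explicit choices of $s$ valid. The first direction, by contrast, reduces to the elementary fact that the minimum of an increasing and a decreasing continuous function is maximised at their intersection.
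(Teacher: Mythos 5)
Your proof is correct and follows the same underlying route as the paper: the paper's own proof simply asserts that equation \eqref{eqq_2_0} follows from Definitions \ref{def_hausdorff} and \ref{sat} and then proves uniqueness exactly as you do, via the strict monotonicity and continuity of $l\left(t\right)=F\left(t\right)+t-1$ with $l\left(0\right)=-1$ and $l\left(1\right)=1$. The only difference is that you supply the two-sided Hausdorff computation (the pointwise distance $\min\left\{t,1-F\left(t\right)\right\}$ from the graph to the $\Gamma$-curve, and the reverse bound by exhibiting a graph point within distance $d$ of every point of the curve) that the paper leaves implicit, and that part of your argument is sound.
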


\begin{proof}
Equation \eqref{eqq_2_0} is true due to  Definitions \ref{def_hausdorff} and \ref{sat}.  Its root is unique because  $l\left(t\right)=F\left(t\right)+t-1$ is an increasing  continuous function with endpoints  $l\left(0\right)=-1<0$ and $l\left(1\right)=1>0$.
\end{proof}

The next two theorems provide a semi-closed form formula for the saturation.

\begin{theorem}\label{th2}
Let the positive random variable $\tau$ be such that

\begin{equation}\label{eqq_2}
\lambda=\tau\left(\frac{1}{\mathbb{E}\left[e^{-\tau}\right]}-1\right)^\beta.
\end{equation}

\noindent
Then the saturation of a min-Kies distribution can be derived as

\begin{equation}\label{eqq_3}
d=\mathbb{E}\left[e^{-\tau}\right].
\end{equation}

\noindent
Note that $0<\mathbb{E}\left[e^{-\tau}\right]<1$.
\end{theorem}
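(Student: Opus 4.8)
The plan is to lean on Corollary \ref{corr_sat}, which already guarantees that the saturation is \emph{the unique} solution of $F\left(d\right)+d=1$; the theorem then becomes a verification that the proposed value solves this equation. First I would recast the equation in terms of the complementary CDF. Since $F\left(d\right)=1-\overline F\left(d\right)$, the saturation equation $F\left(d\right)+d=1$ is equivalent to $\overline F\left(d\right)=d$. By Corollary \ref{cor1} the mixture CCDF is $\overline F\left(t\right)=\mathbb{E}\left[\exp\left(-\lambda\left(\frac{t}{1-t}\right)^\beta\right)\right]$, so it suffices to show that the candidate $d:=\mathbb{E}\left[e^{-\tau}\right]$ satisfies $\overline F\left(d\right)=d$.

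The crux is an algebraic collapse of the exponent using defining relation \eqref{eqq_2}. Substituting $d=\mathbb{E}\left[e^{-\tau}\right]$ and noting that $\frac{1}{\mathbb{E}\left[e^{-\tau}\right]}-1=\frac{1-d}{d}$, relation \eqref{eqq_2} reads $\lambda=\tau\left(\frac{1-d}{d}\right)^\beta$ on every sample event. Multiplying through by $\left(\frac{d}{1-d}\right)^\beta$ cancels the two reciprocal powers and yields the pointwise identity $\lambda\left(\frac{d}{1-d}\right)^\beta=\tau$. Consequently $\exp\left(-\lambda\left(\frac{d}{1-d}\right)^\beta\right)=e^{-\tau}$, and taking expectations gives $\overline F\left(d\right)=\mathbb{E}\left[e^{-\tau}\right]=d$, which is precisely the saturation equation.

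Having exhibited a root, I would invoke the uniqueness half of Corollary \ref{corr_sat} to conclude that $d=\mathbb{E}\left[e^{-\tau}\right]$ is indeed the saturation. The closing bound $0<\mathbb{E}\left[e^{-\tau}\right]<1$ is immediate: as $\tau$ is a positive random variable, $0<e^{-\tau}<1$ almost surely, whence its expectation lies strictly between $0$ and $1$, so the candidate is a legitimate point of the domain.

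I do not expect a genuine analytic obstacle; the substance of the result is that the parametrization \eqref{eqq_2} is reverse-engineered precisely so that the exponent evaluates to $-\tau$ at the saturation point. The only point deserving care is the self-referential character of the hypothesis — $d$ enters the very relation defining $\lambda$ through $\tau$ — so I would stress that the argument is a \emph{verification} that the prescribed $d$ is consistent, not a derivation solving for $d$ ab initio; existence and uniqueness of the saturation are supplied in full by Corollary \ref{corr_sat}.
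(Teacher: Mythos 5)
Your proposal is correct and follows essentially the same route as the paper: both verify that the candidate $d=\mathbb{E}\left[e^{-\tau}\right]$ satisfies $\overline F\left(d\right)=d$ by substituting relation \eqref{eqq_2} so that the exponent collapses to $-\tau$, and then invoke Corollary \ref{corr_sat} for uniqueness. The only cosmetic difference is that you perform the cancellation pointwise before taking the expectation, whereas the paper carries it out inside the expectation, and you spell out the bound $0<\mathbb{E}\left[e^{-\tau}\right]<1$ which the paper merely asserts.
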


\begin{proof}
Suppose that formula \eqref{eqq_2} holds. Using equations \eqref{eq_1} and \eqref{eq_2} and having in mind $0<\mathbb{E}\left[e^{-\tau}\right]<1$ we obtain

\begin{equation}\label{eqq_50}
\begin{split}
1-F\left(\mathbb{E}\left[e^{-\tau}\right]\right)&=\mathbb{E}\left[1-H\left(\mathbb{E}\left[e^{-\tau}\right];\lambda,\beta\right)\right]\\
&=\mathbb{E}\left[\exp\left\{-\lambda\left(\frac{\mathbb{E}\left[e^{-\tau}\right]}{1-\mathbb{E}\left[e^{-\tau}\right]}\right)^\beta\right\}\right]\\
&=\mathbb{E}\left[\exp\left\{-\tau\left(\frac{1}{\mathbb{E}\left[e^{-\tau}\right]}-1\right)^\beta\left(\frac{\mathbb{E}\left[e^{-\tau}\right]}{1-\mathbb{E}\left[e^{-\tau}\right]}\right)^\beta\right\}\right]\\
&=\mathbb{E}\left[e^{-\tau}\right].
\end{split}
\end{equation}

\noindent
We have to use Corollary \ref{corr_sat} to finish the proof.
\end{proof}

\begin{theorem}\label{th3}
The random variable $\tau$ from Theorem \ref{th2} that satisfies condition \eqref{eqq_2} exists and it is unique.
\end{theorem}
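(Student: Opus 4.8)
The plan is to dissolve the self-referential look of condition \eqref{eqq_2}. Although the unknown $\tau$ enters both pointwise and through the functional $\mathbb{E}\left[e^{-\tau}\right]$, the latter is merely a single scalar. I would therefore introduce the auxiliary constant $m:=\mathbb{E}\left[e^{-\tau}\right]$, regard it as an unknown number in $\left(0,1\right)$, and split the problem into a pointwise part (expressing $\tau$ in terms of $m$, $\lambda$, $\beta$) and a global self-consistency part (forcing $m$ to be the actual value of $\mathbb{E}\left[e^{-\tau}\right]$).

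For a fixed candidate $m\in\left(0,1\right)$, condition \eqref{eqq_2} reads $\lambda=\tau\left(\frac{1-m}{m}\right)^\beta$, which I solve pointwise to get
\begin{equation}
\tau=\lambda\left(\frac{m}{1-m}\right)^\beta.
\end{equation}
This is a bona fide positive random variable, and the assignment $m\mapsto\tau$ is a bijection once $\lambda,\beta$ are given. Substituting it back into the defining relation for $m$ yields the self-consistency equation
\begin{equation}
m=\mathbb{E}\left[\exp\left(-\lambda\left(\frac{m}{1-m}\right)^\beta\right)\right].
\end{equation}
The crucial observation is that, by \eqref{eq_1_01_2} and \eqref{eq_5_1}, the right-hand side equals the mixture complementary CDF, $\overline F\left(m\right)=1-F\left(m\right)$. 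Hence the consistency condition is $F\left(m\right)+m=1$, which is exactly the saturation equation \eqref{eqq_2_0}.

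By Corollary \ref{corr_sat} this equation has a unique root $m=d\in\left(0,1\right)$, and setting $\tau:=\lambda\left(\frac{d}{1-d}\right)^\beta$ establishes existence: a direct substitution gives $\tau\left(\frac{1-d}{d}\right)^\beta=\lambda$, while $\mathbb{E}\left[e^{-\tau}\right]=\overline F\left(d\right)=d$ confirms that the chosen $m=d$ is indeed self-consistent, so \eqref{eqq_2} holds. For uniqueness I would run the same chain backwards: any positive random variable $\tau$ satisfying \eqref{eqq_2} has $m:=\mathbb{E}\left[e^{-\tau}\right]\in\left(0,1\right)$ (strictly, since $0<e^{-\tau}<1$ when $\tau>0$ and $\tau<\infty$ almost surely keeps the mean positive), the pointwise form of \eqref{eqq_2} forces $\tau=\lambda\left(\frac{m}{1-m}\right)^\beta$, and feeding this back shows $m$ solves $F\left(m\right)+m=1$; thus $m=d$ by Corollary \ref{corr_sat}, and $\tau$ is pinned down uniquely. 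The only genuine difficulty is the conceptual step of recognizing that the apparently implicit, law-dependent constraint collapses to the scalar fixed-point equation already resolved in Corollary \ref{corr_sat}; once $m$ is isolated the remaining manipulations are routine and require no estimates beyond the existence and uniqueness guaranteed there.
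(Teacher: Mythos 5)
Your proof is correct, and it reorganizes the argument in a way that differs meaningfully from the paper's, especially for uniqueness. The paper proves existence by introducing the auxiliary function $\gamma\left(x\right)=x\left[\frac{1}{\mathbb{E}\left[e^{-\lambda x^\beta}\right]}-1\right]$, locating its unique root $\overline x$ of $\gamma\left(x\right)=1$, and setting $\tau=\lambda\overline x^\beta$; this is your existence step in disguise, since under the substitution $x=\frac{m}{1-m}$ the equation $\gamma\left(x\right)=1$ is exactly your consistency equation $F\left(m\right)+m=1$ (which is why Theorem \ref{th4} records $d=\frac{\overline x}{\overline x+1}$). The real divergence is in uniqueness: the paper takes two solutions $\tau_1,\tau_2$, shows $\frac{\tau_1}{\tau_2}=c^\beta$ for a deterministic $c$, and then runs a separate monotonicity argument on a scalar equation in $c$ to conclude $c=1$. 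You instead observe that any solution's scalar $m=\mathbb{E}\left[e^{-\tau}\right]$ must satisfy the saturation equation \eqref{eqq_2_0}, whose root is already known to be unique by Corollary \ref{corr_sat}, and that $m$ determines $\tau$ pointwise. This buys a shorter, unified proof that makes the link between $\tau$ and the saturation $d$ transparent from the outset, and it avoids the paper's slightly delicate manipulation with the random exponent $\beta$ in the ratio $c^\beta$; the paper's version, on the other hand, keeps existence and uniqueness self-contained within the $\gamma$-function framework that it then reuses algorithmically in Theorem \ref{th4} and Algorithm \ref{alg}. No gaps: your verification that $\mathbb{E}\left[e^{-\tau}\right]=\overline F\left(d\right)=d$ uses only \eqref{eq_1_01_2} and \eqref{eq_5_1}, and the strict inclusion $m\in\left(0,1\right)$ holds because $\tau$ is positive and finite almost surely.
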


\begin{proof}
Suppose that there exist two random variables $\tau_1$ and $\tau_2$ satisfying equation \eqref{eqq_2}. Hence,

\begin{equation}\label{eqq_51}
\tau_1\left(\frac{1}{\mathbb{E}\left[e^{-\tau_1}\right]}-1\right)^\beta=\tau_2\left(\frac{1}{\mathbb{E}\left[e^{-\tau_2}\right]}-1\right)^\beta,
\end{equation}

\noindent
which can be rewritten as

\begin{equation}\label{eqq_52}
\left(\frac{\tau_1}{\tau_2}\right)^{\frac{1}{\beta}}=\frac{\mathbb{E}\left[e^{-\tau_1}\right]}{1-\mathbb{E}\left[e^{-\tau_1}\right]}\frac{1-\mathbb{E}\left[e^{-\tau_2}\right]}{\mathbb{E}\left[e^{-\tau_2}\right]}.
\end{equation}

\noindent
We can see that the right-hand-side of equation \eqref{eqq_52} is a determinist constant. Therefore the ratio $\frac{\tau_1}{\tau_2}$ can be presented as 

\begin{equation}\label{eqq_53}
\frac{\tau_1}{\tau_2}=c^\beta
\end{equation}

\noindent
for some  constant $c$. Combining equations \eqref{eqq_52} and \eqref{eqq_53} we derive

\begin{equation}\label{eqq_54}
\begin{split}
c&=\frac{\mathbb{E}\left[e^{-c^\beta\tau_2}\right]}{1-\mathbb{E}\left[e^{-c^\beta\tau_2}\right]}\frac{1-\mathbb{E}\left[e^{-\tau_2}\right]}{\mathbb{E}\left[e^{-\tau_2}\right]}\\
&=\left(\frac{1}{1-\mathbb{E}\left[e^{-c^\beta\tau_2}\right]}-1\right)\frac{1-\mathbb{E}\left[e^{-\tau_2}\right]}{\mathbb{E}\left[e^{-\tau_2}\right]}.
\end{split}
\end{equation}

\noindent
The right-hand-side of equation \eqref{eqq_54} is a decreasing function w.r.t. variable $c$. Hence, if we consider \eqref{eqq_54} as an equation w.r.t. $c$, then it has at most one solution. We can easily check that this root is $c=1$, which means $\tau_1=\tau_2$.

We turn to the existence task. Let the function $\gamma\left(x\right)$ be defined as

\begin{equation}\label{eqq_55}
\gamma\left(x\right)=x\left[\frac{1}{\mathbb{E}\left[e^{-\lambda x^\beta}\right]}-1\right].
\end{equation}

\noindent
Note that this function increases from zero to infinity in the interval $x\in\left[0,\infty\right)$. Hence, the equation $\gamma\left(x\right)=1$ has a unique solution -- we denote it by $\overline x$. We shall show that the random variable

\begin{equation}\label{eqq_56}
\tau=\lambda\overline x^\beta
\end{equation}

\noindent
satisfies equation \eqref{eqq_2}. Using the equality $\gamma\left(\overline x\right)=1$, we derive

\begin{equation}\label{eqq_57}
\begin{split}
\left(\frac{\lambda}{\tau}\right)^{\frac{1}{\beta}}=\frac{1}{\overline x}=\frac{1-\mathbb{E}\left[e^{-\lambda \overline x^\beta}\right]}{\mathbb{E}\left[e^{-\lambda \overline x^\beta}\right]}=\frac{1-\mathbb{E}\left[e^{-\tau}\right]}{\mathbb{E}\left[e^{-\tau}\right]},
\end{split}
\end{equation}

\noindent
which is equivalent to equation \eqref{eqq_2}. This finishes the proof. 
\end{proof}

The following theorem is an immediate corollary of Theorems \ref{th2} and \ref{th3} and gives an approach for deriving the saturation of the mixed-Kies distributions.

\begin{theorem}\label{th4}
Let $\overline x$ be the solution of the equation $\gamma\left(x\right)=1$, where the function $\gamma\left(x\right)$ is defined by formula \eqref{eqq_55}. Note that this root is unique in the interval $x\in\left[0,\infty\right)$. Then the saturation can be obtained via formula \eqref{eqq_3}, where $\tau$ is defined by equation \eqref{eqq_56}. Combining equations $\gamma\left(x\right)=1$, \eqref{eqq_3}, and \eqref{eqq_56}, we see that the saturation can be derived also as

\begin{equation}\label{eqq_57_1}
d=\frac{\overline x}{\overline x+1}.
\end{equation}
\end{theorem}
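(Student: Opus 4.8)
The plan is to read Theorem~\ref{th4} as a direct bookkeeping consequence of the two preceding theorems, so almost all the work is already done; what remains is to assemble the pieces and verify one algebraic identity. Theorem~\ref{th3} guarantees that the equation $\gamma\left(x\right)=1$ has a unique root $\overline x\in\left[0,\infty\right)$, since $\gamma$ increases from $0$ to $\infty$; this is exactly the uniqueness claim restated in the theorem. Theorem~\ref{th3} also constructs, from this root, the random variable $\tau=\lambda\overline x^\beta$ satisfying condition~\eqref{eqq_2}, and Theorem~\ref{th2} then yields the saturation as $d=\mathbb{E}\left[e^{-\tau}\right]$. So the first step is simply to invoke these two results and conclude that the saturation is given by formula~\eqref{eqq_3} with $\tau$ as in~\eqref{eqq_56}.

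The only genuinely new content is the closed form~\eqref{eqq_57_1}, and I would derive it from the defining relation $\gamma\left(\overline x\right)=1$. Writing out~\eqref{eqq_55} at $x=\overline x$ gives
\begin{equation}\label{eqq_th4_plan}
\overline x\left[\frac{1}{\mathbb{E}\left[e^{-\lambda \overline x^\beta}\right]}-1\right]=1.
\end{equation}
Since $\tau=\lambda\overline x^\beta$ we have $\mathbb{E}\left[e^{-\lambda\overline x^\beta}\right]=\mathbb{E}\left[e^{-\tau}\right]=d$ by~\eqref{eqq_3}, so the bracket equals $\tfrac{1}{d}-1=\tfrac{1-d}{d}$. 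The identity then reads $\overline x\,\tfrac{1-d}{d}=1$, i.e. $d=\overline x\left(1-d\right)$, and solving for $d$ gives $d=\tfrac{\overline x}{\overline x+1}$, which is~\eqref{eqq_57_1}. This is the same inversion as the fractional-linear map $t=\tfrac{y}{y+1}$ underlying the Kies construction, so the formula is unsurprising once the substitution is made.

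I do not expect a serious obstacle here; the statement is explicitly flagged in the excerpt as ``an immediate corollary,'' and the derivation is one line of algebra on top of the identification $\mathbb{E}\left[e^{-\tau}\right]=d$. The point requiring a little care is keeping the chain of substitutions consistent: one must use $\tau=\lambda\overline x^\beta$ to rewrite $\mathbb{E}\left[e^{-\lambda\overline x^\beta}\right]$ as $d$, rather than treating $d$ and $\overline x$ as independent, and one should note in passing that $\overline x>0$ (so that $\overline x+1\neq0$ and the final fraction lies in $\left(0,1\right)$, consistent with the domain of the distribution and with the bound $0<\mathbb{E}\left[e^{-\tau}\right]<1$ from Theorem~\ref{th2}). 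Assembling these observations completes the proof.
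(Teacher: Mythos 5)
Your proof is correct and follows exactly the route the paper intends: Theorem \ref{th3} supplies the unique root $\overline x$ and the random variable $\tau=\lambda\overline x^\beta$, Theorem \ref{th2} gives $d=\mathbb{E}\left[e^{-\tau}\right]$, and substituting this into $\gamma\left(\overline x\right)=1$ yields $d=\frac{\overline x}{\overline x+1}$. The paper treats the theorem as an immediate corollary and omits the one-line algebra you spell out, so there is nothing to add.
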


Based on Theorem \ref{th4}, we can establish the following algorithm for deriving the saturation.

\begin{algorithm}\label{alg}
      
We can obtain the saturation through three steps.
\begin{enumerate}
\item
We derive $\overline x$ as the solution of  $\gamma\left(x\right)=1$, where the function $\gamma\left(x\right)$ is given by formula \eqref{eqq_55}.

\item
We obtain $\tau$ via equation \eqref{eqq_56}.

\item
We derive the saturation through formula \eqref{eqq_3} or equivalently through  \eqref{eqq_57_1}.
\end{enumerate}
\end{algorithm}

\section{Some examples}\label{examples}

 We can devise the set of all mixtures introduced  by Definition \ref{def1} into two main classes -- discrete and continuous. We shall consider separately several  examples of both kinds.

\subsection{Discrete mixtures}

Suppose that we have $n\le \infty$ original Kies-distributions -- if $n=\infty$ we want the set of these distributions to be countable. Note that  conditions \eqref{eq6_14_1}-\eqref{eq6_14_3} are satisfied when $n<\infty$. The possible values of the random variables $\lambda$ and $\beta$ shall be denoted by $\lambda_i$ and $\beta_i$, $i=1,2,...,n$. The probabilities of these values to happen are denoted by $p_i>0$. We can write PDF, CDF, and CCDF in the used above terms as 

\begin{equation}\label{eqq_58}
\begin{split}
f\left(t\right)&=\sum\limits_{i=1}^{n}{p_i h\left(t;\lambda_i,\beta_i\right)}\\
F\left(t\right)&=\sum\limits_{i=1}^{n}{p_i H\left(t;\lambda_i,\beta_i\right)}\\
\overline F\left(t\right)&=\sum\limits_{i=1}^{n}{p_i \overline H\left(t;\lambda_i,\beta_i\right)},
\end{split}
\end{equation}

\noindent
where the functions $h\left(t;\lambda_i,\beta_i\right)$, $H\left(t;\lambda_i,\beta_i\right)$, and $\overline H\left(t;\lambda_i,\beta_i\right)$ are given by equations \eqref{eq_1}-\eqref{eq_1_01_3}. Proposition \ref{prop_left} leads to the following results.

\begin{corollary}\label{cor_10}

Let the set $\left\{1,2,...,n\right\}$ be devised into the subsets $A_1$, $A_2$, and $A_3$ such that (i) if $i\in A_1$, then $\beta_i<1$; (ii) if $i\in A_2$, then $\beta_i=1$; (iii) if $i\in A_3$, then $\beta_i>1$. The following statements hold:

\begin{enumerate}

\item

If $A_1\equiv A_2\equiv \emptyset$, then $f\left(0\right)=0$.

\item
If $A_1\equiv \emptyset$ but $ A_2\neq \emptyset$, then  $f\left(t\right)=\sum\limits_{i\in A_2}{p_i \lambda_i}$.

\item

If $ A_1\neq \emptyset$, then $f\left(0\right)=\infty$.

\end{enumerate}

\end{corollary}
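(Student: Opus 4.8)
The plan is to obtain all three statements directly from Proposition \ref{prop_left} by specializing the abstract probability space to the discrete law of the pair $\left(\lambda,\beta\right)$, namely $\mathbb{P}\left(\lambda=\lambda_i,\beta=\beta_i\right)=p_i$. Under this law the events appearing in Proposition \ref{prop_left} become sums over the index subsets, $\mathbb{P}\left(\beta<1\right)=\sum_{i\in A_1}p_i$, $\mathbb{P}\left(\beta=1\right)=\sum_{i\in A_2}p_i$, and $\mathbb{P}\left(\beta>1\right)=\sum_{i\in A_3}p_i$. First I would record that the hypotheses of Definition \ref{def1} hold here (automatically so for $n<\infty$), so that the mixture PDF is given by \eqref{eq_5} and Proposition \ref{prop_left} is applicable.

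For the first statement, the assumption $A_1\equiv A_2\equiv\emptyset$ translates into $\mathbb{P}\left(\beta>1\right)=\sum_{i\in A_3}p_i=1$, which is exactly the hypothesis of the first alternative in Proposition \ref{prop_left}; it yields $f\left(0\right)=0$. For the third statement, $A_1\neq\emptyset$ gives $\mathbb{P}\left(\beta<1\right)=\sum_{i\in A_1}p_i>0$, matching the third alternative, so $f\left(0\right)=\infty$. Both of these require nothing beyond reading off the corresponding case.

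The only step with genuine content is the second statement. Here $A_1\equiv\emptyset$ together with $A_2\neq\emptyset$ give $\mathbb{P}\left(\beta<1\right)=0$ and $\mathbb{P}\left(\beta=1\right)>0$, so the second alternative of Proposition \ref{prop_left} applies and $f\left(0\right)=\mathbb{Q}\left(\beta=1\right)\mathbb{E}\left[\lambda\right]$ with $\mathbb{Q}$ defined through the Radon--Nikodym derivative \eqref{eq_10}. I would then evaluate this product explicitly. Writing $\mathbb{E}\left[\lambda\right]=\sum_{i}p_i\lambda_i$ and computing the $\mathbb{Q}$-probability as a $\mathbb{P}$-expectation,
\[
\mathbb{Q}\left(\beta=1\right)=\mathbb{E}\left[\frac{\lambda}{\mathbb{E}\left[\lambda\right]}I_{\beta=1}\right]=\frac{1}{\mathbb{E}\left[\lambda\right]}\sum_{i\in A_2}p_i\lambda_i,
\]
so the normalizing factor $\mathbb{E}\left[\lambda\right]$ cancels and we are left with $f\left(0\right)=\sum_{i\in A_2}p_i\lambda_i$, which is the asserted value.

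I expect no real obstacle, since the corollary is essentially a specialization of an already-proved result. The only place where a short computation is genuinely needed is the cancellation of $\mathbb{E}\left[\lambda\right]$ against the normalization of the change of measure in the second case; everything else is a matter of matching the discrete hypotheses to the three alternatives of Proposition \ref{prop_left}.
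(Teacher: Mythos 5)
Your proposal is correct and follows essentially the same route as the paper, which simply asserts that Corollary \ref{cor_10} follows from Proposition \ref{prop_left} by specializing to the discrete law $\mathbb{P}\left(\lambda=\lambda_i,\beta=\beta_i\right)=p_i$. Your explicit cancellation of $\mathbb{E}\left[\lambda\right]$ against the normalization of the measure change in the second case is exactly the small computation the paper leaves implicit, and it correctly yields $f\left(0\right)=\sum_{i\in A_2}p_i\lambda_i$.
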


\subsubsection{Bimodal distribution}

Let the parameter $\beta$ be deterministic and larger than one, say $\beta=2$, whereas  the  parameter $\lambda$ achieves two values, say  $\lambda\in\left\{0.1,2\right\}$,   with probabilities  $p_1=p_2=0.5$ or $p_1=0.25$ and $p_2=0.75$. Proposition \ref{prop1} shows that the PDFs of the original Kies distributions are zero in the domain's endpoints and have a unique maximum. The mixture distribution exhibits bi-modality -- all PDFs can be seen in Figure \ref{peaks_2}. Of course, some parameter values may lead  to an unimodal distributions -- for example, the random variable  $\lambda\in\left\{1,2\right\}$, together with the same values for the rest parameters, leads to a such PDF, see Figure \ref{peaks_1}.

Another example for bi-modality can be seen at Figure \ref{peaks_2_fin} -- the assumed parameters are $\lambda\in\left\{2,0.5\right\}$ and $\beta\in\left\{2,1\right\}$. The probabilities are again $p_1=p_2=0.5$ or $p_1=0.25$ and $p_2=0.75$. The main difference with the previous examples is that one of the $\beta$-values  is one and another is larger than one. Corollary \ref{cor_10}, the second statement, explains why the left endpoint of the PDF is larger than zero but finite.  Something more, its value is $p_2\lambda_2$ -- $\frac{1}{4}$ and $\frac{3}{8}$ for both possibilities for $p_1$ and $p_2$, respectively. 

Finally, we present a bimodal distribution with infinite left endpoint -- see Figure \ref{peaks_2_infin}. The chosen parameters are  $\lambda\in\left\{2,0.5\right\}$ and $\beta\in\left\{0.2,2\right\}$; the probabilities are the same as above. The third statement of Corollary \ref{cor_10} confirms that the mixture's left endpoints are the infinity since $\beta_1<1$.

Let us discuss the Hausdorff saturation defined in Section  \ref{hausdorff}. We shall follow Algorithm \ref{alg}. We first have to derive the solution of equation $\gamma\left(x\right)=1$, which now turns to

\begin{equation}\label{eqq_58_10}
x\left(\frac{1}{\sum\limits_{j=1}^n{p_j e^{-\lambda_j x^{\beta_j}}}}-1\right)=1.
\end{equation}

\noindent
Estimating $n=2$, $\beta=2$, $p_1=p_2=0.5$,  $\lambda_1=0.1$, and $\lambda_2=2$, we derive $\overline x=1.0338$. Hence, the values of $\tau=\lambda \overline x^\beta$ are $\tau_1=0.1069$ and $\tau_2=2.1374$ and the saturation is $d=0.5083$. We can easily check that equation \eqref{eqq_2_0} holds.

For the second case ($p_1=0.25$, $p_2=0.75$), we derive $\overline x=0.7986$,  $\tau_1=0.0638$,  $\tau_2=1.2755$, and  $d=0.4440$. The CDF together with the saturation can be viewed in Figure \ref{bim_cdf}. The saturation is indicated by a red circle. Note that the red lines form a square with vertices $\left(0,1-d\right)$, $\left(d,1-d\right)$, $\left(d,1\right)$, and $\left(0,1\right)$.

\subsubsection{Multimodal distributions}

We present a multimodal Kies mixture based on four original Kies distributions  in Figure \ref{peaks_4}. The used parameters are $\lambda\in\left\{0.1,0.5,5,10\right\}$ and $\beta=2$. The probabilities are assumed to be equal, i.e. $p_1=p_2=p_3=p_4=0.25$.  The first statement of Corollary\ref{cor_10} shows  that the mixture left endpoint is zero since $\beta>1$.

Applying Algorithm \ref{alg} and having in mind equation \eqref{eqq_58_10}, we derive $\overline x=0.7700$ and  $\tau\in\left\{0.0593,    0.2965,    2.9646,    5.9291\right\}$. Thus   the saturation is $d=0.4350$.

\subsubsection{Binomial underlying distribution}

Let  the random variable $\lambda-1$  be binomial distributed with parameters $\left(n,p\right)$ and $ \beta$ be a constant. Note that $\lambda$ is positive. We have for the probabilities $p_i$, $ i=1,2,...,n+1$:

\begin{equation}\label{eqq_58_1}
p_i=\mathbb{P}\left(\lambda=i\right)=\binom{n}{i-1}p^{i-1}\left(1-p\right)^{n+1-i}.
\end{equation}

\noindent
We turn to deriving the CCDF. We cannot use directly Corollary \ref{MGF} because we know the distribution of the random variable $\lambda-1$, not $\lambda$. However, we have 

\begin{equation}\label{eqq_59}
\begin{split}
\overline F\left(t\right)&=\sum\limits_{i=1}^{n+1}{p_i \overline H\left(t;\lambda_i,\beta\right)}\\
&=\sum\limits_{i=1}^{n+1}{p_i\exp\left(-i\left(\frac{t}{1-t}\right)^\beta\right)}\\
&=\exp\left(-\left(\frac{t}{1-t}\right)^\beta\right)\sum\limits_{i=0}^{n}{p_i\exp\left(-\left(i-1\right)\left(\frac{t}{1-t}\right)^\beta\right)}.\\
\end{split}
\end{equation}

\noindent
We can recognize in the sum above the MGF of the binomial distribution applied to the term $-\left(\frac{t}{1-t}\right)^\beta$. Hence, if we denote this function by $\psi\left(x\right)$,

\begin{equation}\label{eqq_59_1}
\psi\left(x\right)=\left(1-p+pe^x\right)^n,
\end{equation}

\noindent
 then we derive

\begin{equation}\label{eqq_60}
\begin{split}
\overline F\left(t\right)&=\exp\left(-\left(\frac{t}{1-t}\right)^\beta\right)\psi\left(-\left(\frac{t}{1-t}\right)^\beta\right)\\
&=\exp\left(-\left(\frac{t}{1-t}\right)^\beta\right)\left(1-p+p\exp\left(-\left(\frac{t}{1-t}\right)^\beta\right)\right)^n.
\end{split}
\end{equation}

\noindent
Having in mind that the derivative of the function $\left(\frac{t}{1-t}\right)$ is $\frac{1}{\left(1-t\right)^2}$ we obtain for the binomial  Kies mixture:

\begin{equation}\label{eqq_61}
\begin{split}
f\left(t\right)&=\beta\frac{t^{\beta-1}}{\left(1-t\right)^{\beta+1}}\exp\left(-\left(\frac{t}{1-t}\right)^\beta\right)\left(1-p+p\exp\left(-\left(\frac{t}{1-t}\right)^\beta\right)\right)^n\\
&+np\beta\frac{t^{\beta-1}}{\left(1-t\right)^{\beta+1}}\exp\left(-2\left(\frac{t}{1-t}\right)^\beta\right)\left(1-p+p\exp\left(-\left(\frac{t}{1-t}\right)^\beta\right)\right)^{n-1}\\
&=\beta\frac{t^{\beta-1}}{\left(1-t\right)^{\beta+1}}\exp\left(-\left(\frac{t}{1-t}\right)^\beta\right)\left(1-p+p\exp\left(-\left(\frac{t}{1-t}\right)^\beta\right)\right)^{n-1}\times\\
&\times\left(1-p+p\left(n+1\right)\exp\left(-\left(\frac{t}{1-t}\right)^\beta\right)\right).
\end{split}
\end{equation}

\noindent
We present at Figure \ref{binomial}  the PDFs of the Kies mixtures when $\beta=2$, $n\in\left\{10,50\right\}$, and $p\in\left\{0.5,0.25\right\}$.

We use again  Algorithm \ref{alg}   to obtain the Hausdorff saturation $d$. Remind that $\lambda=1,2,...,11$ when $n=10$. In the case $p=0.25$, we derive for the values of  $p_i$, $\overline x$, and $\tau$:

\begin{equation}\label{eqq_61_1}
\begin{split}
&p\in\{0.0563 , 0.1877, 0.2816, 0.2503, 0.1460, 0.0584, 0.0162, 0.0031, 0.0004, \\
&2.8610\times 10^{-5}, 9.5367\times 10^{-7} \}\\
&\overline x=0.5632\\
&\tau\in\{0.3172, 0.6344, 0.9515, 1.2687, 1.5859, 1.9031, 2.2203, 2.5374,\\
& 2.8546, 3.1718, 3.4890 \}.
\end{split}
\end{equation}

\noindent
We obtain the saturation  as $d=0.3603$ via formula \eqref{eqq_57_1}. Alternatively, we can  view the expectation in formula \eqref{eqq_55} as the MGF, i.e. 

\begin{equation}\label{eqq_55_1}
\mathbb{E}\left[e^{-\lambda x^\beta}\right]=e^{-x^2}\left(1-p+pe^{-x^2}\right)^n
\end{equation}

\noindent
Remind that  $\lambda$ is one plus a binomial distributed random variable. Thus, the equation $\gamma\left(x\right)=1$ turns to 

\begin{equation}\label{eqq_55_2}
x\left[\frac{e^{x^2}}{\left(1-p+pe^{-x^2}\right)^n}-1\right]=1.
\end{equation}

\noindent
Following the same approach we derive for the saturation: $\left\{\overline x=0.4510,d=0.3108\right\}$ when $\left\{\beta=2,n=10,p=0.5\right\}$; $\left\{\overline x=0.3279,d=0.2470\right\}$ when $\left\{\beta=2,n=50,p=0.25\right\}$; and $\left\{\overline x=0.2506,d=0.2004\right\}$ when $\left\{\beta=2,n=50,p=0.5\right\}$. The CDF together with the Hausdorff saturation are presented in Figure \ref{bin_cdf}.

\subsubsection{Geometric underlying distribution}

We investigate now a  Kies mixture based on a geometric  distribution with parameter $p$ on the support $\left\{1,2,...\right\}$ for the random variable $\lambda$. This is a mixture between an infinite number of original Kies distributions. Note that conditions \eqref{eq6_14_1}-\eqref{eq6_14_2} are satisfied because $\mathbb{E}\left[\lambda^{-\frac{1}{\beta}}\right]<1$.   The parameter $\beta$ is again assumed to be deterministic. The probabilities $p_i$ for $ i=1,2,...$  are

\begin{equation}\label{eqq_62}
p_i=\mathbb{P}\left(\lambda=i\right)=p\left(1-p\right)^{i-1}.
\end{equation}

\noindent
We can obtain the mixture CCDF through Corollary \ref{MGF} as the MGF of the geometric distributions $\psi\left(x\right)$ taken at the point $-\left(\frac{t}{1-t}\right)^\beta$. Having in mind that this function is 

\begin{equation}\label{eqq_63}
\psi\left(x\right)=\frac{pe^x}{1-\left(1-p\right)e^x},
\end{equation}

\noindent
 we conclude

\begin{equation}\label{eqq_64}
\overline F\left(t\right)=\frac{p}{\exp\left(\left(\frac{t}{1-t}\right)^\beta\right)+p-1}.
\end{equation}

\noindent
Differentiating we derive the PDF as 

\begin{equation}\label{eqq_65}
f\left(t\right)=\frac{p\beta\exp\left(\left(\frac{t}{1-t}\right)^\beta\right)t^{\beta-1}}{\left(\exp\left(\left(\frac{t}{1-t}\right)^\beta\right)+p-1\right)^2\left(1-t\right)^{\beta+1}}.
\end{equation}

\noindent
We present in Figure \ref{geometric} the PDFs of the obtained mixtures considering the parameter $p$ in the set $\left\{0.25,0.5,0.75\right\}$. We assume also that $\beta=2$. The saturations can be derived using Algorithm \ref{alg}. We again view the expectation in  function $\gamma\left(\cdot\right)$, given by \eqref{eqq_55}, as  MGF \eqref{eqq_63} taken at the point $-x^\beta$. Thus, equation $\gamma\left(x\right)=1$ turns to

\begin{equation}\label{eqq_65_1}
x\left(e^{x^\beta}-1\right)=p.
\end{equation}

\noindent
Solving this equation we derive for its root $\overline x$ and the related saturation:  $\left\{\overline x=0.5931,d=0.3723\right\}$ when $\left\{\beta=2,p=0.25\right\}$; $\left\{\overline x=0.7245,d=0.4201\right\}$ when $\left\{\beta=2,p=0.5\right\}$; and $\left\{\overline x=0.8097,d=0.4474\right\}$ when $\left\{\beta=2,p=0.75\right\}$. We present the CDF in the case $\left\{\beta=2,p=0.25\right\}$ in Figure \ref{geom_cdf} -- there can be seen the saturation as a red point too.

\subsection{Continuous distributions}\label{cont}

We consider now several mixtures based on the assumption that the parameter $\lambda$ follows some continuous distribution with support on the positive real half-line. The random variable  $\beta$ is assumed to be deterministic.

\subsubsection{Exponential distribution}

Let  $\beta>1$ and  the random variable $\lambda$ be exponentially distributed with intensity $\theta$.  Thus its PDF is $p\left(x\right)=\theta e^{-\theta x}$. The MGF is defined for $x<\theta$ and it is 

\begin{equation}\label{eqq_66}
\psi\left(x\right)=\frac{\theta}{\theta-x}.
\end{equation}

\noindent
Applying Corollary \ref{MGF} we see that the CCDF of the  Kies mixture can be obtain after the substitution $x=-\left(\frac{t}{1-t}\right)^\beta$. Note that the condition $x<\theta$ is satisfied. Hence,

\begin{equation}\label{eqq_67}
\overline F\left(t\right)=\frac{\theta\left(1-t\right)^\beta}{\theta\left(1-t\right)^\beta+t^\beta}.
\end{equation}

\noindent
Differentiating, we derive the PDF of the exponential mixture 

\begin{equation}\label{eqq_68}
f\left(t\right)=\frac{\theta\beta t^{\beta-1}\left(1-t\right)^{\beta-1}}{\left(\theta\left(1-t\right)^{\beta}+t^{\beta}\right)^2}.
\end{equation}

\noindent
Several PDFs can be seen in Figure \ref{expo} -- the intensity parameter $\theta$ is among $\left\{0.5,1,2,5\right\}$. We find the  saturation using  Algorithm \ref{alg}  having in mind that if  $\beta$ is a fixed constant, then the expectation given by formula \eqref{eqq_55} function $\gamma\left(\cdot\right)$ is  MGF \eqref{eqq_66} evaluated at the point $-x^\beta$. In the case of the exponential distribution, the equation $\gamma\left(x\right)=1$ can be solved explicitly, and it leads to

\begin{equation}\label{eqq_68_1}
\overline x=\theta^{\frac{1}{\beta+1}} \Leftrightarrow d=\frac{\theta^{\frac{1}{\beta+1}}}{\theta^{\frac{1}{\beta+1}}+1}.
\end{equation}

\noindent
Thus, we derive $\left\{\overline x=0.7937,d=0.4425\right\}$ when $\left\{\beta=2,\theta=0.5\right\}$; $\left\{\overline x=1,d=0.5\right\}$ when $\left\{\beta=2,\theta=1\right\}$; $\left\{\overline x=1.2599,d=0.5575\right\}$ when $\left\{\beta=2,\theta=2\right\}$;\newline  $\left\{\overline x=1.7100,d=0.6310\right\}$ when $\left\{\beta=2,\theta=5\right\}$. The CDF, together with the saturation, in the case $\left\{\beta=2,\theta=1\right\}$ can be seen in Figure \ref{exp_cdf}.

Let us check conditions \eqref{eq6_14_1}-\eqref{eq6_14_3}. Obviously, requirements \eqref{eq6_14_2} and \eqref{eq6_14_3} hold.  We need to consider the expectation $\mathbb{E}\left[\lambda^{-\frac{1}{\beta}}\right]$ for condition \eqref{eq6_14_1}. It can be written as

\begin{equation}\label{eqq_69}
\mathbb{E}\left[\lambda^{-\frac{1}{\beta}}\right]=\int\limits_0^\infty{x^{-\frac{1}{\beta}}\theta e^{-\theta x}dx}=\theta^{\frac{1}{\beta}}\int\limits_0^\infty{y^{\left(1-\frac{1}{\beta}\right)-1}e^{-y}dy}.
\end{equation}

\noindent
The last integral converges only when $\beta>1$ -- the limit is the gamma function $\Gamma\left(1-\frac{1}{\beta}\right)$. On the opposite, if $\beta\le 1$, integral \eqref{eqq_69} diverges, and hence condition \eqref{eq6_14_1} is not satisfied.

Let us consider now the case $\beta=1$. Function \eqref{eq_2} again defines a distribution. Its CCDF can be derived once again through the MGF and thus it and the PDF turn to 

\begin{equation}\label{eqq_70}
\begin{split}
\overline F\left(t\right)&=\frac{\theta\left(1-t\right)}{\theta\left(1-t\right)+t}\\
f\left(t\right)&=\frac{\theta }{\left(\theta\left(1-t\right)+t\right)^2}.
\end{split}
\end{equation}

\noindent
We can see that Corollary \ref{cor1} and Proposition \ref{prop_left} do not hold. For example, $\mathbb{E}\left[h\left(1,\lambda,\beta\right)\right]=0$, but $f\left(1\right)=\theta$.  We can formulate also the following result

\begin{proposition}
The uniform distribution on the interval $\left(0,1\right)$ can be viewed as a Kies mixture with $\beta=1$ and exponentially distributed $\lambda$ with intensity one.
\end{proposition}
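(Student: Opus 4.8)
The plan is to verify the claim by direct substitution into the explicit formulas already obtained for the case $\beta=1$. The proposition singles out one specific member of the exponential Kies-mixture family -- the one with $\beta=1$ and intensity $\theta=1$ -- and asserts that it coincides with the standard uniform law, so no new machinery is needed beyond formula \eqref{eqq_70}.

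First I would take the CCDF from \eqref{eqq_70}, namely $\overline F\left(t\right)=\frac{\theta\left(1-t\right)}{\theta\left(1-t\right)+t}$, and set $\theta=1$. The denominator simplifies to $\left(1-t\right)+t=1$, so the CCDF collapses to $\overline F\left(t\right)=1-t$, equivalently $F\left(t\right)=t$ on $\left(0,1\right)$. This is exactly the CDF of the uniform distribution on $\left(0,1\right)$, which establishes the claim. As a cross-check I would also substitute $\theta=1$ into the PDF in \eqref{eqq_70}, $f\left(t\right)=\frac{\theta}{\left(\theta\left(1-t\right)+t\right)^2}$, where again the denominator becomes $1$ and hence $f\left(t\right)=1$ for every $t\in\left(0,1\right)$ -- the uniform density, consistent with the CDF computation.

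There is no genuine obstacle here; the computation is a one-line simplification. The only subtlety worth flagging is that the regularity conditions \eqref{eq6_14_1}--\eqref{eq6_14_3} of Definition \ref{def1} are not all in force when $\beta=1$, since condition \eqref{eq6_14_1} fails, as the divergence of integral \eqref{eqq_69} shows. However, the lemma preceding Definition \ref{def1} guarantees that $F$ defined through \eqref{eq_2} is a bona fide CDF for any positive random parameters $\lambda$ and $\beta$, so formula \eqref{eqq_70} legitimately describes a distribution even in this boundary case, and the identification with the uniform law is valid.
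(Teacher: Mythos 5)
Your proposal is correct and matches the paper's own proof, which simply recognizes the uniform CCDF and PDF in formulas \eqref{eqq_70} upon setting $\theta=1$. Your additional remark about condition \eqref{eq6_14_1} failing for $\beta=1$ while $F$ still defines a valid distribution is also exactly the point the paper makes in the text immediately preceding the proposition.
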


\begin{proof}
We recognize the CCDF and PDF of the uniform distribution in formulas \eqref{eqq_70} when $\theta=1$.
\end{proof}

\subsubsection{Gamma distribution}

Suppose now that the random variable $\lambda$ is gamma distributed with shape and rate parameters $\alpha$ and $\theta$, respectively. Note that this distribution generalizes the exponential one.  The gamma PDF can be written as

\begin{equation}\label{eqq_71}
p\left(x\right)=\frac{\theta^\alpha}{\Gamma\left(\alpha\right)}x^{\alpha-1}e^{-\theta x}.
\end{equation}

\noindent
 Let $\beta$ be again a constant. We shall check first when conditions \eqref{eq6_14_1}-\eqref{eq6_14_3} are satisfied. Obviously, we need to consider the expectation $\mathbb{E}\left[\lambda^{-\frac{1}{\beta}}\right]$. We have

\begin{equation}\label{eqq_72}
\mathbb{E}\left[\lambda^{-\frac{1}{\beta}}\right]=\frac{\theta^\alpha}{\Gamma\left(\alpha\right)}\int\limits_0^\infty{x^{\alpha-\frac{1}{\beta}-1}e^{-\theta x}dx}=\theta^{\frac{1}{\beta}}\int\limits_0^\infty{y^{\alpha-\frac{1}{\beta}-1}e^{-y}dy}.
\end{equation}

\noindent
The integral above converges only when $\beta>\frac{1}{\alpha}$ -- its value is $\Gamma\left(\alpha-\frac{1}{\beta}\right)$. We have for the gamma MGF

\begin{equation}\label{eqq_73}
\psi\left(x\right)=\left(\frac{\theta}{\theta-x}\right)^\alpha
\end{equation}

\noindent
for $x<\theta$. Applying again Corollary \ref{MGF}, we obtain for the mixture CCDF

\begin{equation}\label{eqq_74}
\overline F\left(t\right)=\psi\left(-\left(\frac{t}{1-t}\right)^\beta\right)=\left(\frac{\theta\left(1-t\right)^\beta}{\theta\left(1-t\right)^\beta+t^\beta}\right)^\alpha.
\end{equation}

\noindent
Differentiating, we derive the PDF as 

\begin{equation}\label{eqq_75}
f\left(t\right)=\frac{\alpha\theta^\alpha\beta t^{\beta-1}\left(1-t\right)^{\alpha\beta-1}}{\left(\theta\left(1-t\right)^{\beta}+t^{\beta}\right)^{\alpha+1}}.
\end{equation}

\noindent
We present some PDFs in Figure \ref{gam_2} for $\alpha=2$ and $\theta\in\left\{0.5,1,2,5\right\}$. We again use Algorithm \ref{alg} to derive the saturation. The equation $\gamma\left(x\right)=1$ now can be written as

\begin{equation}\label{eqq_75_1}
x\left(\left(\theta+x^\beta\right)^{\alpha}-\theta^{\alpha}\right)=\theta^{\alpha},
\end{equation}

\noindent
because the expectation in \eqref{eqq_55} is MGF \eqref{eqq_73} evaluated at the point $-\lambda x^\beta$. Hence,  $\left\{\overline x=0.5731,d=0.3643\right\}$ when $\left\{\alpha=2,\beta=2,\theta=0.5\right\}$; $\left\{\overline x=0.7332,d=0.4230\right\}$ when $\left\{\alpha=2,\beta=2,\theta=1\right\}$; $\left\{\overline x=0.9361,d=0.4835\right\}$ when $\left\{\alpha=2,\beta=2,\theta=2\right\}$; and  $\left\{\overline x=1.2894,d=0.5632\right\}$ when $\left\{\alpha=2,\beta=2,\theta=5\right\}$.

Suppose now that $\beta=1$. Formula \eqref{eqq_75} leads to $f\left(0\right)=\frac{\alpha}{\theta}$ in which we recognize the expectation of the gamma distribution, $\mathbb{E}\left[\lambda\right]$. This is in accordance with the second statement of Proposition \ref{prop_left}. Note that $\mathbb{Q}\left(\beta=1\right)=1$. The PDFs for the same values of $\alpha$ and $\theta$ can be viewed in Figure \ref{gam_1}. Their initial values are $\frac{\alpha}{\theta}$, particularly $4$, $2$, $0.5$, and $0.4$. The saturations can be derived in the same way as above -- they are $\left\{\overline x=0.4196,d=0.2956\right\}$ when $\left\{\alpha=2,\beta=1,\theta=0.5\right\}$; $\left\{\overline x=0.6180,d=0.3820\right\}$ when $\left\{\alpha=2,\beta=1,\theta=1\right\}$; $\left\{\overline x=0.9032,d=0.4746\right\}$ when $\left\{\alpha=2,\beta=1,\theta=2\right\}$; and  $\left\{\overline x=1.4760,d=0.5961\right\}$ when $\left\{\alpha=2,\beta=1,\theta=5\right\}$.

Finally, we present some PDFs of the third kind w.r.t. the initial value assuming that $\beta=0.7$ -- see Figure \ref{gam_07}. Note that $f\left(0\right)=\infty $ due to the third statement of Proposition \ref{prop_left}. The saturations can be obtained analogously -- $\left\{\overline x=0.3512,d=0.2599\right\}$ when $\left\{\alpha=2,\beta=0.7,\theta=0.5\right\}$; $\left\{\overline x=0.5615,d=0.3596\right\}$ when $\left\{\alpha=2,\beta=0.7,\theta=1\right\}$; $\left\{\overline x=0.8855,d=0.4696\right\}$ when $\left\{\alpha=2,\beta=0.7,\theta=2\right\}$; and  $\left\{\overline x=1.5885,d=0.6137\right\}$ when $\left\{\alpha=2,\beta=0.7,\theta=5\right\}$. The CDF in the case $\left\{\alpha=2,\beta=0.7,\theta=0.5\right\}$ together with the related saturation can be seen in figure \ref{gam_cdf}.

\subsubsection{Beta distribution}

Let us assume now  that the random variable $\lambda$ is beta distributed with parameters $\alpha$ and $\theta$, i.e. its PDF and MGF are

\begin{equation}\label{eqq_76}
\begin{split}
p\left(x\right)&=\frac{x^{\alpha-1}\left(1-x\right)^{\theta -1}}{B\left(\alpha,\theta\right)dx}\\
\psi\left(x\right)&= {}_1 F_1\left(\alpha,\alpha+\theta,x\right),
\end{split}
\end{equation}

\noindent
where ${}_1 F_1\left(\cdot,\cdot,\cdot\right)$ is the confluent hypergeometric function of the first kind and the beta function is defined as the ratio

\begin{equation}\label{eqq_77}
B\left(\alpha,\theta\right)=\frac{\Gamma\left(\alpha\right)\Gamma\left(\theta\right)}{\Gamma\left(\alpha+\theta\right)}.
\end{equation}

\noindent
We shall check when condition \eqref{eq6_14_1} is satisfied. We have

\begin{equation}\label{eqq_78}
\mathbb{E}\left[\lambda^{-\frac{1}{\beta}}\right]=\int\limits_0^\infty{x^{-\frac{1}{\beta}}\frac{x^{\alpha-1}\left(1-x\right)^{\beta -1}}{B\left(\alpha,\beta\right)}dx}=\frac{1}{B\left(\alpha,\beta\right)}\int\limits_0^\infty{x^{\alpha-\frac{1}{\beta}-1}\left(1-x\right)^{\beta -1}dx}.
\end{equation}

\noindent
The integral above converges when $\beta>\frac{1}{\alpha}$. Applying  Corollary \ref{MGF}, we obtain the CCDF of the Kies-beta-mixture 

\begin{equation}\label{eqq_79}
\overline F\left(t\right)=\psi\left(-\left(\frac{t}{1-t}\right)^\beta\right)={}_1 F_1\left(\alpha,\alpha+\theta,-\left(\frac{t}{1-t}\right)^\beta\right).
\end{equation}

\noindent
Differentiating in equation \eqref{eqq_79}  and using the formula for the confluent hypergeometric function's  derivative  -- see for example \cite{gradshteyn2014table}, page 1023, formula 9.213 -- we derive for the PDF of the mixture

\begin{equation}\label{eqq_80}
f\left(t\right)=\frac{\alpha\beta}{\alpha+\theta}{}_1 F_1\left(\alpha+1,\alpha+\theta+1,-\left(\frac{t}{1-t}\right)^\beta\right)\frac{t^{\beta-1}}{\left(1-t\right)^{\beta+1}}.
\end{equation}

\noindent
Let us check the behavior of PDF \eqref{eqq_80} when $t\to 1$ or equivalently $-\left(\frac{t}{1-t}\right)^\beta\to -\infty$. Having in mind that ${}_1 F_1\left(\alpha+1,\alpha+\theta+1,-x\right)$ tends asymptotically  to $\frac{\Gamma\left(\alpha+\theta+1\right)}{\Gamma\left(\theta\right)}x^{-\left(\alpha+1\right)}$ when $x\to\infty$ -- see \cite{abramowitz1968handbook}, page 508, formula 13.5.1 -- we derive

\begin{equation}\label{eqq_82}
\begin{split}
f\left(1\right)&=\lim\limits_{t\to 1}{\frac{\alpha\beta}{\alpha+\theta}\frac{\Gamma\left(\alpha+\theta+1\right)}{\Gamma\left(\theta\right)}\left(\frac{t}{1-t}\right)^{-\beta\left(\alpha+1\right)}\frac{t^{\beta-1}}{\left(1-t\right)^{\beta+1}}}\\
&=\alpha\beta\frac{\Gamma\left(\alpha+\theta\right)}{\Gamma\left(\theta\right)}\lim\limits_{t\to 1}{\left(1-t\right)^{\alpha\beta-1}}.
\end{split}
\end{equation}

\noindent
We conclude that $f\left(1\right)=\infty$ when $\beta<\frac{1}{\alpha}$; $f\left(1\right)=\alpha\beta\frac{\Gamma\left(\alpha+\theta\right)}{\Gamma\left(\theta\right)}$ when $\beta=\frac{1}{\alpha}$; and  $f\left(1\right)=0$ when $\beta>\frac{1}{\alpha}$. Thus, the right endpoint is zero if condition \eqref{eq6_14_1} is satisfied due to equation \eqref{eqq_78}.  Having in mind that the value of the confluent hypergeometric function in the zero is one, we  find a confirmation of Proposition \ref{prop_left} in formula \eqref{eqq_80}. Note that if $\beta=1$, then $f\left(0\right)=\frac{\alpha}{\alpha+\theta}$, which is the expectation of the beta distribution. Some PDFs can be seen in Figure \ref{bet} -- the considered parameters are $\alpha=3$, $\theta=1$, and $\beta\in\left\{0.5,1,2\right\}$. Note that the condition $\beta>\frac{1}{\alpha}$ holds.  We derive the saturations  through Algorithm \ref{alg}. Now, the equation $\gamma\left(x\right)=1$ can be written as

\begin{equation}\label{eqq_82_1}
x\left[\frac{1}{{}_1 F_1\left(\alpha,\alpha+\theta,-x^\beta\right)}-1\right]=1.
\end{equation}

\noindent
The derived values are $\left\{\overline x=0.9577,d=0.4892\right\}$ when $\left\{\alpha=3,\beta=0.5,\theta=1\right\}$; $\left\{\overline x=0.9697,d=0.4923\right\}$ when $\left\{\alpha=3,\beta=1,\theta=1\right\}$; $\left\{\overline x=0.9806,d=0.4951\right\}$ when $\left\{\alpha=3,\beta=2,\theta=1\right\}$. The CDF  and the saturation for the last triple  are presented in Figure \ref{bet_cdf}.

\section{An application}\label{num_ex}

We shall check now the benefits, which the proposed mixture gives,  using two empirical samples -- one arising from the financial markets and another for the unemployment insurance issues. These data are considered in \cite{zaevski_kyurkchiev_2023,zaevski_kyurkchiev_2024} too.  We use a modification of  the classical least square errors approach. First, note that we need to scale the empirical data because the considered distributions are stated at the domain $\left(0,1\right)$. Let the total number of observations be denoted by $N$. We devise the domain into $m$ sub-intervals and denote by $N_i$ the number of observations that   fall in the $i$-th one, $i=1,2,...,m$. We derive the empirical PDF values as $l_i^{\mathrm{emp}}=\frac{mN_i}{N}$. We assign these values to the centers of the sub-intervals. We denote by $l_i^{\mathrm{th}}\left(\gamma\right)$ the theoretical PDF values at these points  for a Kies mixture with parameter's set $\gamma$. We define our cost function as

\begin{equation}\label{eq20_1}
L\left(\gamma\right):=\sum\limits_{i=1}^m{\left|\ln\left(l_i^{\mathrm{emp}}+\epsilon\right)-\ln\left(l_i^{\mathrm{th}}\left(\gamma\right)+\epsilon\right)\right|}.
\end{equation}

\noindent
We want to obtain the parameters $\gamma$ which minimize function \eqref{eq20_1}. We introduce the logarithmic correction because some Kies distributions tend to infinity in the left endpoint. The constant $\epsilon$ is necessary because some empirical values can be equal to zero which will lead to the minus infinity for the logarithm. We set this constant to $\epsilon=0.01$.

We shall compare several mixtures -- original Kies (A1), bimodal (A2), multimodal (A3), binomial (A4), geometric (A5), exponential (A6), gamma (A7), and beta (A8). We assume for the last five mixtures that the random variable $\lambda$ does not exhibit the corresponding distribution, but its linear transformation. This increases significantly the applicability of the proposed models. If the original random variable is denoted by $\xi$, then the MGF of the resulting one can be derived as $\psi_{a\xi+b}\left(x\right)=e^{bx}\psi_{\xi}\left(ax\right)$.  The PDF of the random variable $\lambda=a\xi+b$  can be obtained via Corollary \ref{MGF} -- see also Section \ref{examples}.  All resulting PDFs are reported in Appendix \ref{app1}. Note that if $b>0$, then condition \eqref{eq6_14_1} is satisfied because the term $\left(at+b\right)$ in the integral 

\begin{equation}\label{eq20_2}
\mathbb{E}=\left[\lambda^{-\frac{1}{\beta}}\right]=\int\limits_{\mathbb{R}}{\left(at+b\right)^{-\frac{1}{\beta}}f\left(t\right)dt}
\end{equation}

\noindent
does not influence the possible singularity of $f\left(t\right)$ in the zero. The case $b=0$ is considered in Section \ref{cont}.

\subsection{S\&P500 index}

The statistical sample consists of a total of  $10\ 717$ daily observations for the S\&P500 index in the period between January 2, 1980,  and July 01, 2022. We look for the market shocks  defined as the dates at which the index falls with more than two percent. More precisely, we are interested in the length of the periods between two shocks measured in working days -- the calm periods. Their number is 357 and they vary between $1$ and $950$. We divide them  by $1\ 000$ to fit the distribution's domain. For more details see     \cite{zaevski_kyurkchiev_2023} -- there can be found also the original Kies calibration. The interval $\left(0,1\right)$ is devised into $m=50$ sub-intervals for the current test. All mixture  estimations we prepare are reported in the first part of  Table  \ref{para_num}. The corresponding densities are presented in Figure \ref{sp_fig}. The inner figure is for  the distribution's core -- the interval $\left(0.03,0.2\right)$.  We can see that although the statistical data suggests that the initial density value is infinity, some of the estimated distributions do not exhibit this feature. In terms of Proposition \ref{prop_left} and Corollary \ref{cor_10} this means that the variable $\beta$, random or not, is larger than one. This is the case for the three-modal estimation as well as for the binomial, geometric, exponential, and gamma mixtures.

\subsection{Unemployment insurance issues}

The second example is related to the monthly observations of the unemployment insurance issues for the period between 1971 and 2018 -- a total of $574$ observations in the range  $\left[49\ 263,308\ 352\right]$. The data can be found at https://data.worlddatany-govns8zxewg or in \cite{Vasileva_Kyurkchiev}, pp. 162-164. Some statistical experiments based on the same data  are provided also  in \cite{ahmad2021new,he2020arcsine,zaevski_kyurkchiev_2024,zhenwu2021genesis}. We need first to process the statistical sample to make it convenient for our investigation.  In \cite{zaevski_kyurkchiev_2024} the same  statistical sample is divided by  $50\ 000$ and thus the new data is in the interval $\left[0.9853,6.1670\right]$. The results of this article strongly indicate that the estimated Kies style distributions are supported in  intervals close to $\left(1,9\right)$. This motivates us to transform the original data $S$ to

\begin{equation}\label{eq20_3}
S_{\mathrm{new}}=\frac{S-\min\left(S\right)}{1.5\left(\max\left(S\right)-\min\left(S\right)\right)}.
\end{equation}

\noindent
 This way we can compare the current results with those presented in \cite{zaevski_kyurkchiev_2024}.  We devise now  the interval $\left(0,1\right)$ into $m=20$ sub-intervals. The derived estimates   are reported  in the second part of Table  \ref{para_num} and the corresponding PDFs can be seen in Figure \ref{uii_fig}. We can observe that the best fit produces the multimodal distribution -- this is true for the first statistical sample too. This is not surprising since these distributions are very flexible to fit different curves. On the other hand, they are prone to over-fitting and thus they should only be used when there is a strong evidence that the considered  statistical sample is indeed multimodal. Also,  the fit which produces the exponential mixture is remarkable given that the exponential distribution is driven  by only one parameter. Of course, the gamma approximation is better since the gamma distribution generalizes the exponential one.

\section*{Acknowledgments}

The first author is financed by the European Union-NextGenerationEU, through the National Recovery and Resilience Plan of the Republic of Bulgaria, project No BG-RRP-2.004-0008.

The second author is financed by the European Union-NextGenerationEU, through the National Recovery and Resilience Plan of the Republic of Bulgaria, project No BG-RRP-2.004-0001-C01.

\section*{Declarations}
{\bf Conflict of interest} The authors declare no competing interests.

\begin{appendix}
\section{PDFs of the linear transformed distributions}\label{app1}

\begin{equation}\label{eqq_61_10}
\begin{split}
f_{\mathrm{bin}}\left(t\right)&=\beta\frac{t^{\beta-1}}{\left(1-t\right)^{\beta+1}}\exp\left(-b\left(\frac{t}{1-t}\right)^\beta\right)\left(1-p+p\exp\left(-a\left(\frac{t}{1-t}\right)^\beta\right)\right)^{n-1}\times\\
&\times\left[b-bp+p\left(na+b\right)\exp\left(-a\left(\frac{t}{1-t}\right)^\beta\right)\right]\\
f_{\mathrm{geo}}\left(t\right)&=p\beta t^{\beta-1}\frac{\left(b+a\right)\exp\left(\left(a-b\right)\left(\frac{t}{1-t}\right)^\beta\right)-b\left(1-p\right)\exp\left(-b\left(\frac{t}{1-t}\right)^\beta\right)}{\left(\exp\left(a\left(\frac{t}{1-t}\right)^\beta\right)+p-1\right)^2\left(1-t\right)^{\beta+1}}\\
f_{\mathrm{exp}}\left(t\right)&=\frac{\theta\beta t^{\beta-1}\left(1-t\right)^{\beta-1}\exp\left(-b\left(\frac{t}{1-t}\right)^\beta\right)}{\left(\theta\left(1-t\right)^{\beta}+at^{\beta}\right)^2}\left[b\left(\theta+a\left(\frac{t}{1-t}\right)^\beta\right)+a\right]\\
f_{\gamma}\left(t\right)&=\frac{\theta^\alpha\beta t^{\beta-1}\left(1-t\right)^{\alpha\beta-1}\exp\left(-b\left(\frac{t}{1-t}\right)^\beta\right)}{\left(\theta\left(1-t\right)^{\beta}+at^{\beta}\right)^{\alpha+1}}\left[b\left(\theta+a\left(\frac{t}{1-t}\right)^\beta\right)+a\alpha\right]\\
f_{\beta}\left(t\right)&=\beta\exp\left(-b\left(\frac{t}{1-t}\right)^\beta\right) \frac{t^{\beta-1}}{\left(1-t\right)^{\beta+1}}\\
&\times\left[\begin{array}{l}
b{}_1F_1\left(\alpha,\alpha+\theta,-a\left(\frac{t}{1-t}\right)^\beta\right)\\
+\frac{\alpha a}{\alpha+\theta}{}_1F_1\left(\alpha+1,\alpha+\theta+1,-a\left(\frac{t}{1-t}\right)^\beta\right)
\end{array}\right]
\end{split}
\end{equation}

\end{appendix}

\bibliographystyle{chicago}
\bibliography{Kies_bib}

\section{Figures and tables}

\begin{figure}
    \caption{ Mixture-Kies PDFs}\label{PDF}
    \setcounter{subfigure}{0}
		          \subfloat[$\lambda\in\left\{0.1,2\right\},\beta=2$] {\label{peaks_2}\includegraphics[width=.50\textwidth]{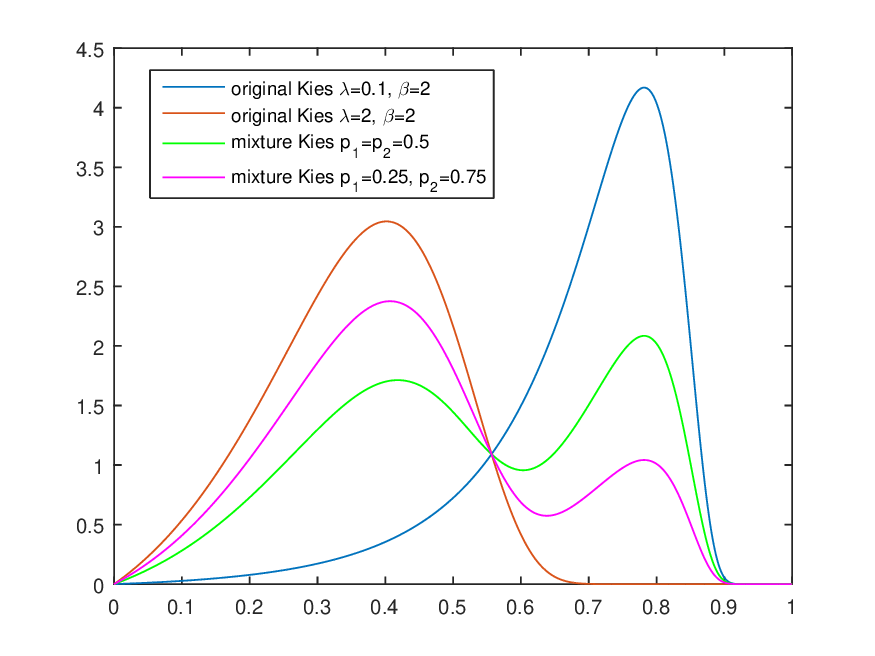}}\subfloat[$\lambda\in\left\{1,2\right\},\beta=2$] {\label{peaks_1}\includegraphics[width=.50\textwidth]{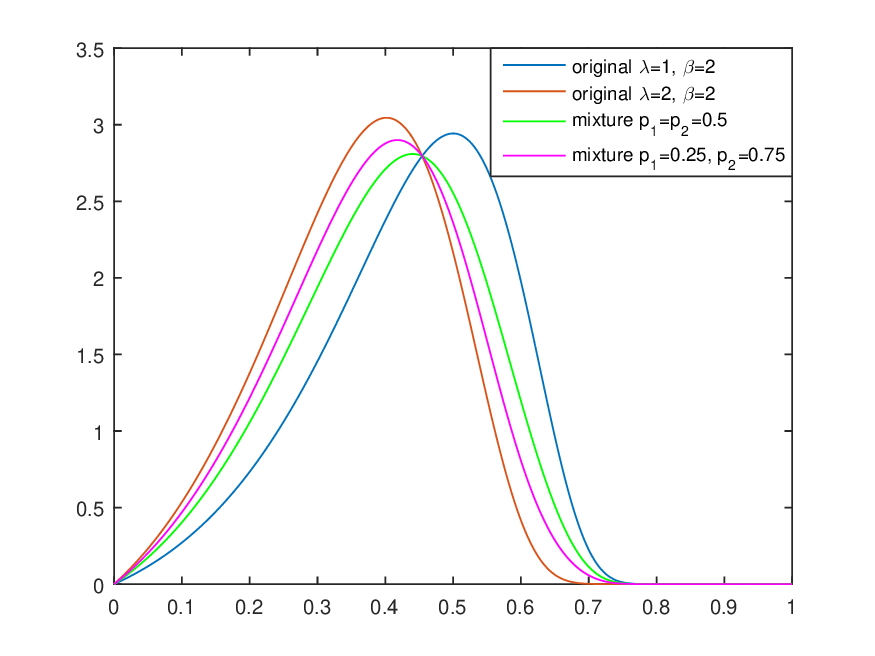}}\\
							\subfloat[$\lambda\in\left\{0.1,2\right\},\beta=2$] {\label{peaks_2_fin}\includegraphics[width=.50\textwidth]{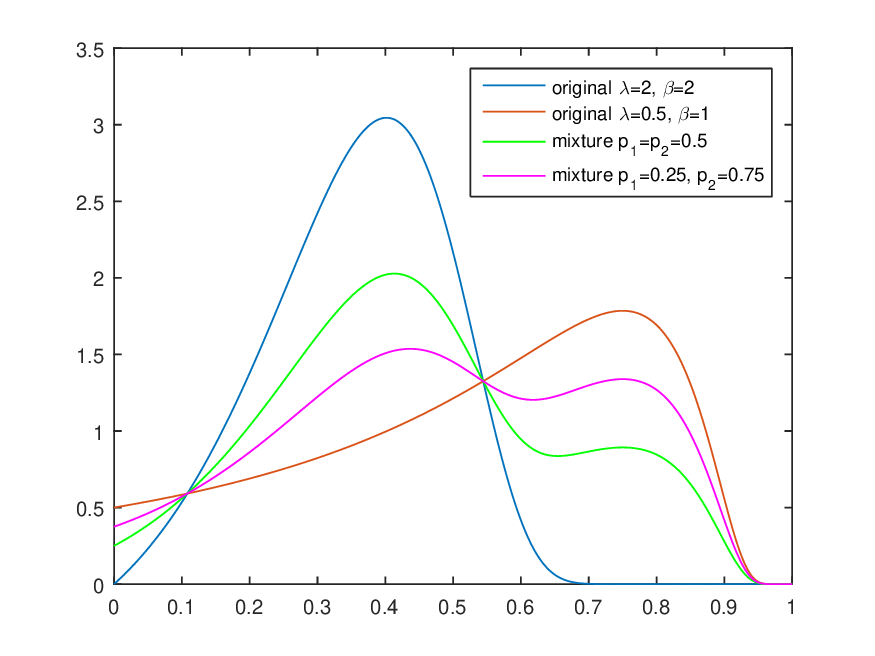}}		
						  \subfloat[$\lambda\in\left\{0.1,2\right\},\beta=2$] {\label{peaks_2_infin}\includegraphics[width=.50\textwidth]{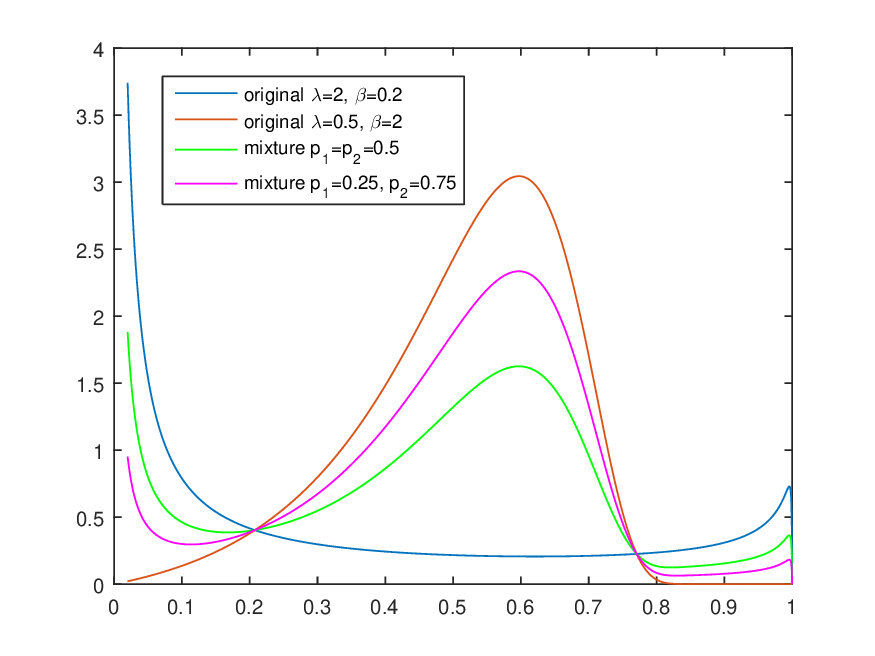}}\\
							\subfloat[$\lambda\in\left\{0.1,2\right\},\beta=2$] {\label{peaks_4}\includegraphics[width=.50\textwidth]{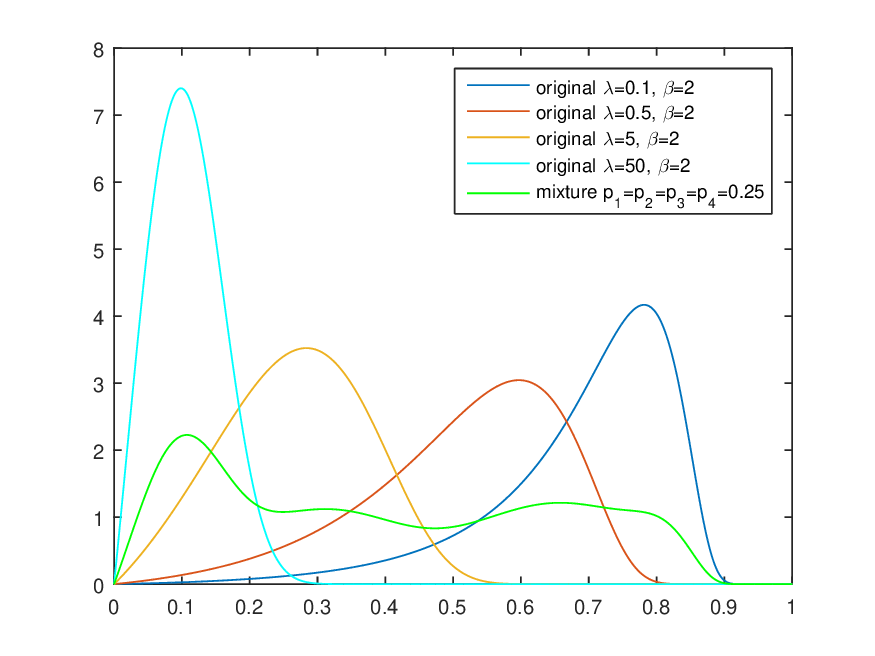}}
							\subfloat[Binomial mixture] {\label{binomial}\includegraphics[width=.50\textwidth]{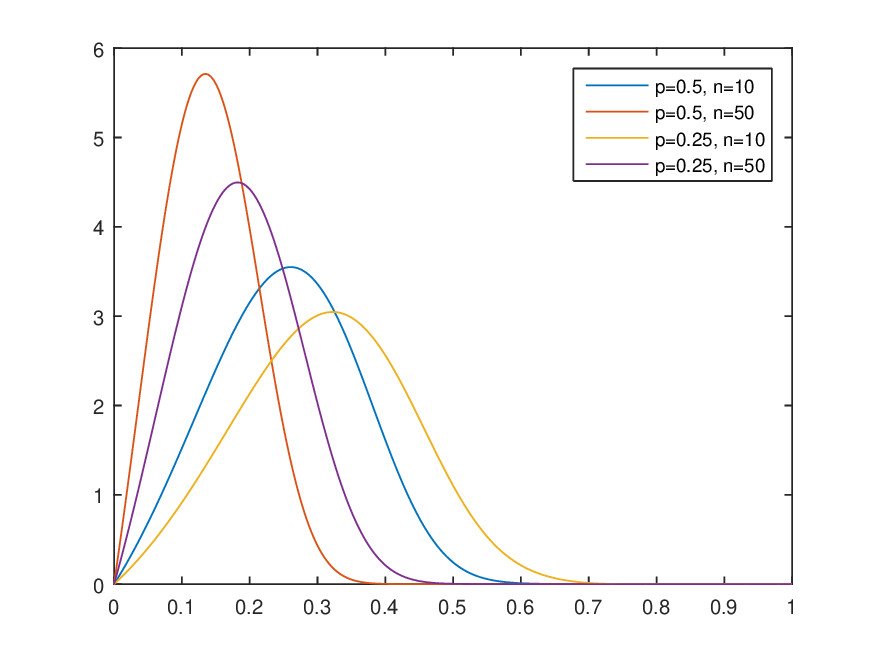}}
\end{figure}

\begin{figure}
    \caption{ Mixture-Kies PDFs}\label{PDF_2}
    \setcounter{subfigure}{0}
		          \subfloat[Geometric mixture] {\label{geometric}\includegraphics[width=.50\textwidth]{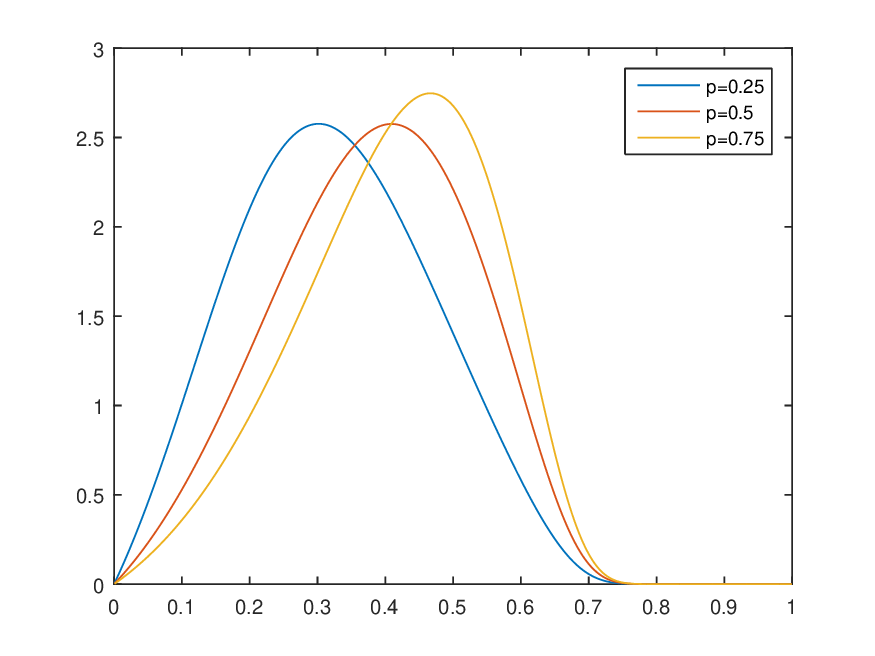}}
							\subfloat[Exponential mixture] {\label{expo}\includegraphics[width=.50\textwidth]{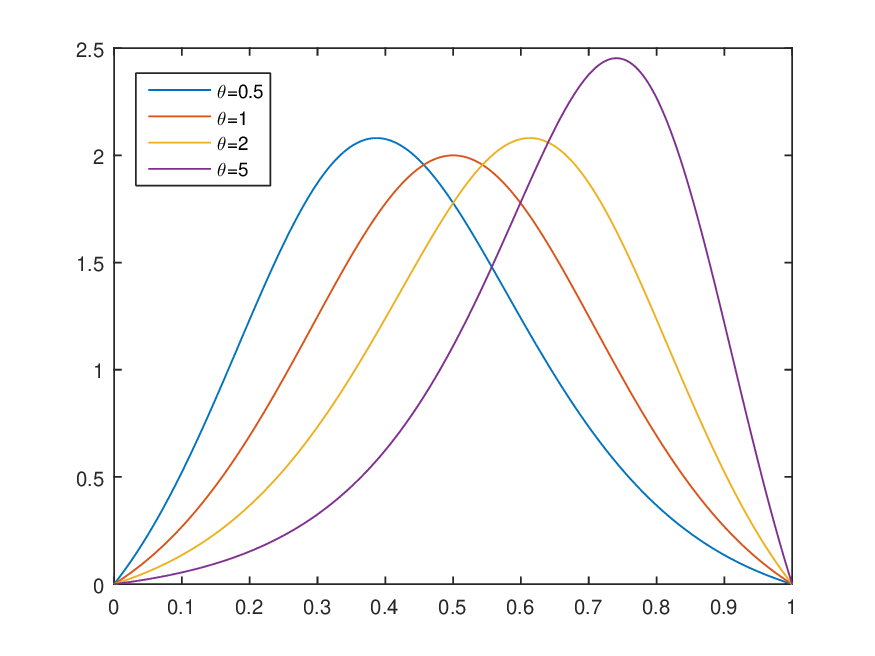}}\\
							\subfloat[Gamma mixture, $\beta=2$] {\label{gam_2}\includegraphics[width=.50\textwidth]{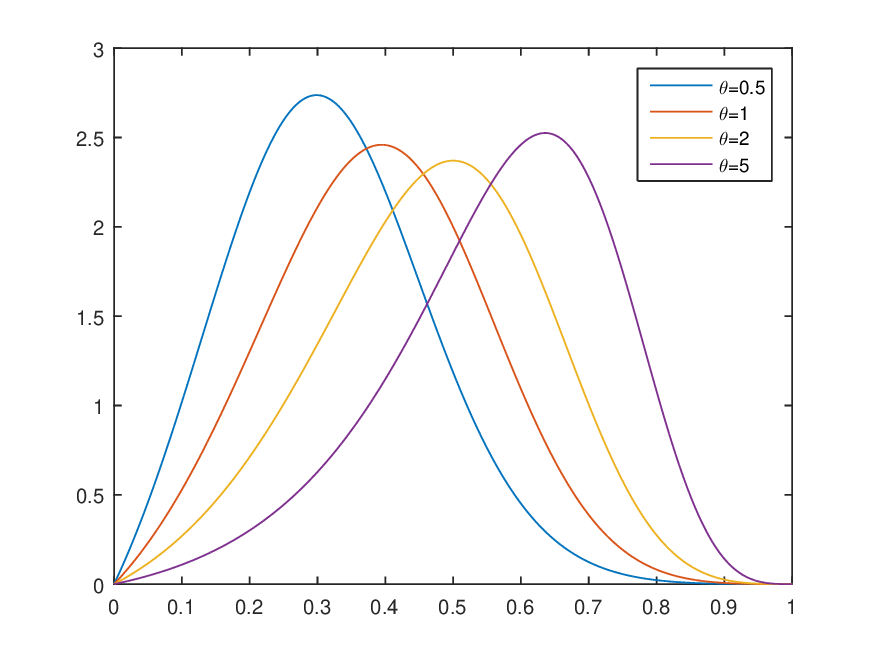}}
							\subfloat[Gamma mixture, $\beta=1$] {\label{gam_1}\includegraphics[width=.50\textwidth]{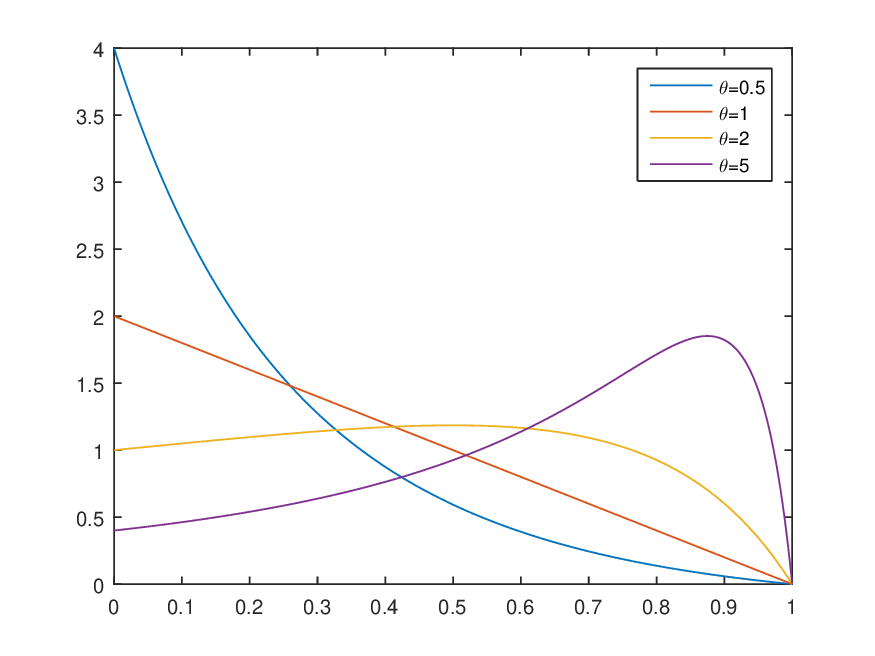}}\\
							\subfloat[Gamma mixture, $\beta=0.7$] {\label{gam_07}\includegraphics[width=.50\textwidth]{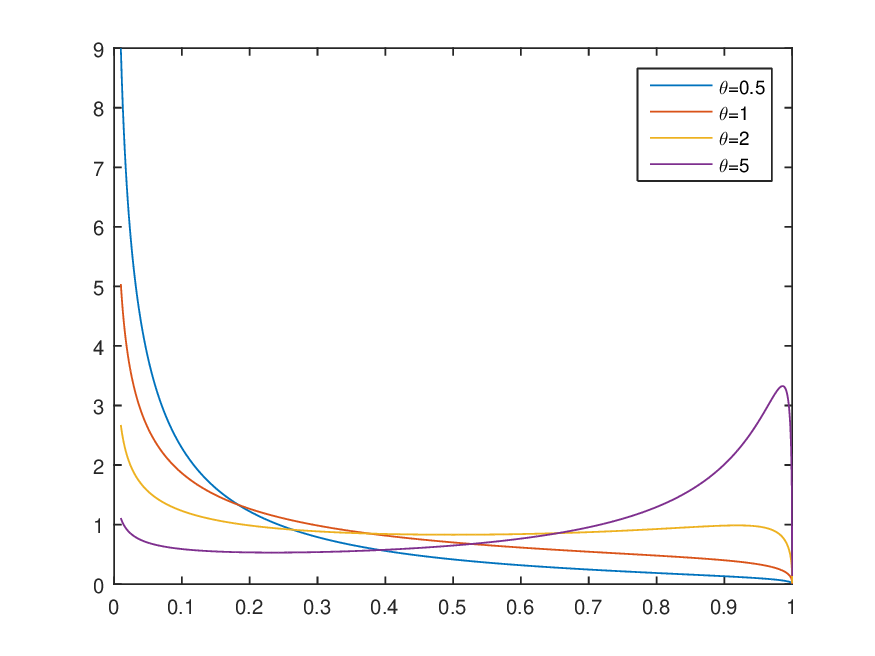}}
							\subfloat[Beta mixture] {\label{bet}\includegraphics[width=.50\textwidth]{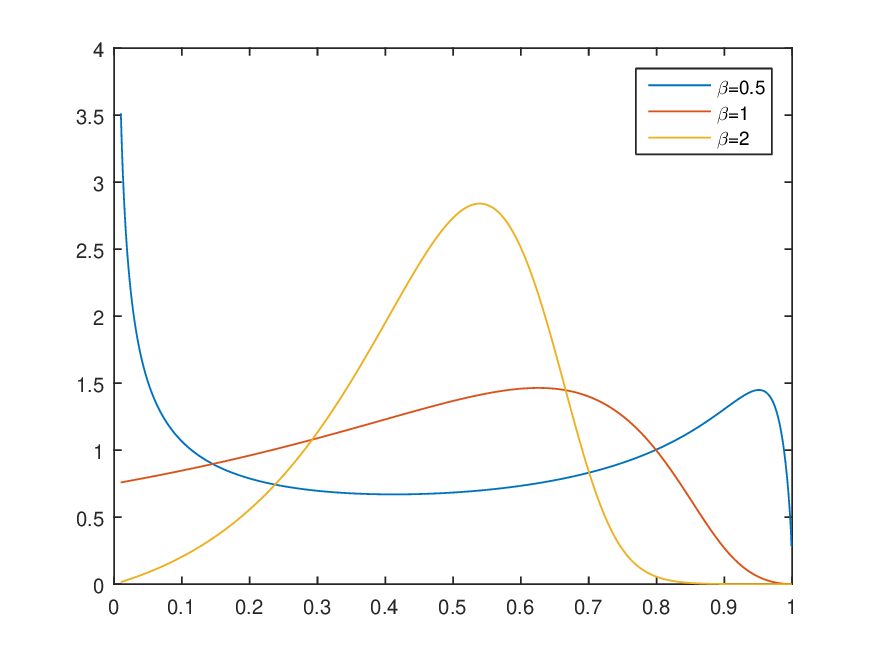}}
\end{figure}

\begin{figure}
    \caption{ Mixture-Kies CDFs with saturation}\label{CDF_2}
    \setcounter{subfigure}{0}
		          \subfloat[Bimodal mixture] {\label{bim_cdf}\includegraphics[width=.50\textwidth]{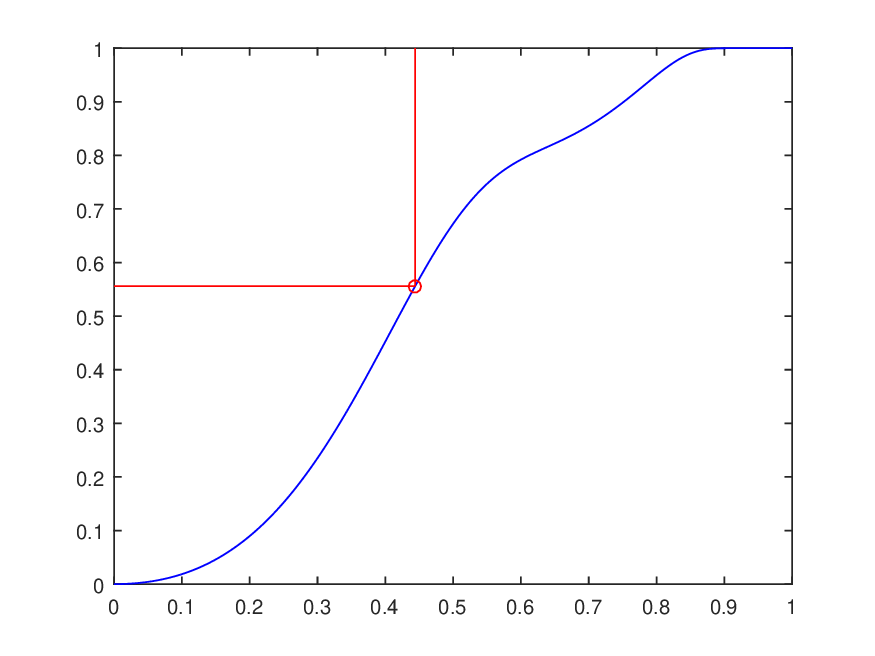}}
							\subfloat[Binomial mixture] {\label{bin_cdf}\includegraphics[width=.50\textwidth]{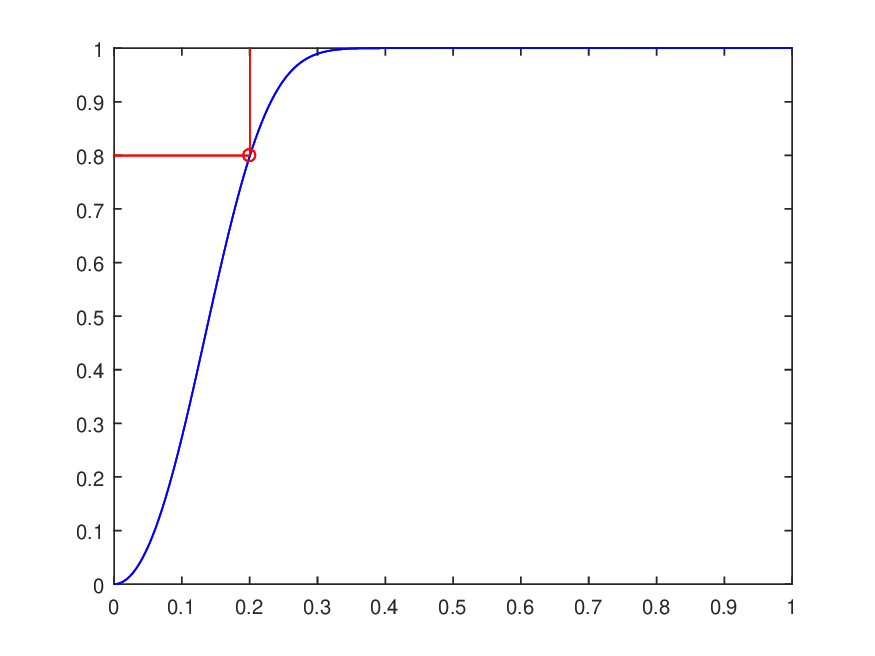}}\\
							\subfloat[Geometrical mixture] {\label{geom_cdf}\includegraphics[width=.50\textwidth]{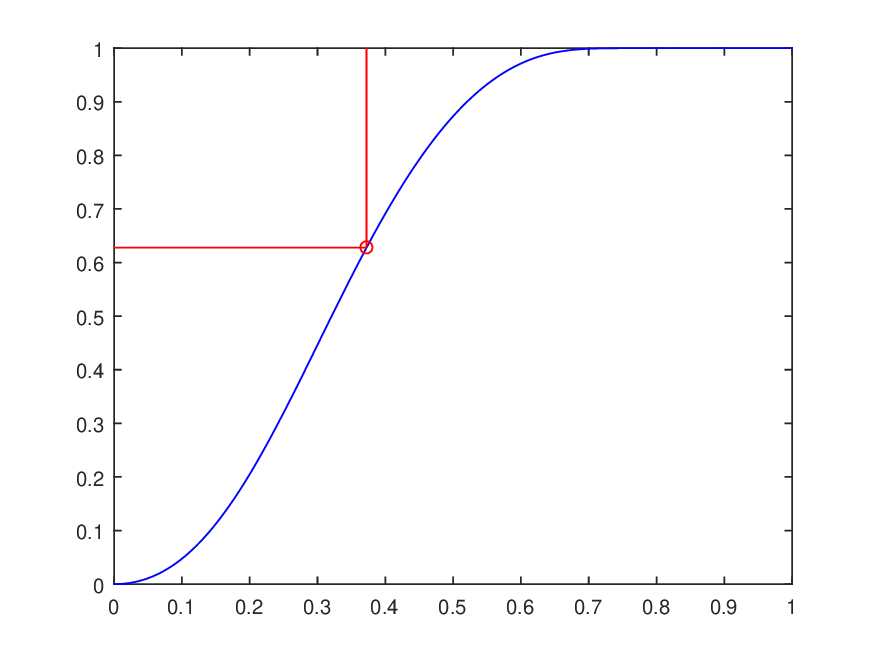}}
							\subfloat[Exponential mixture, $\beta=1$] {\label{exp_cdf}\includegraphics[width=.50\textwidth]{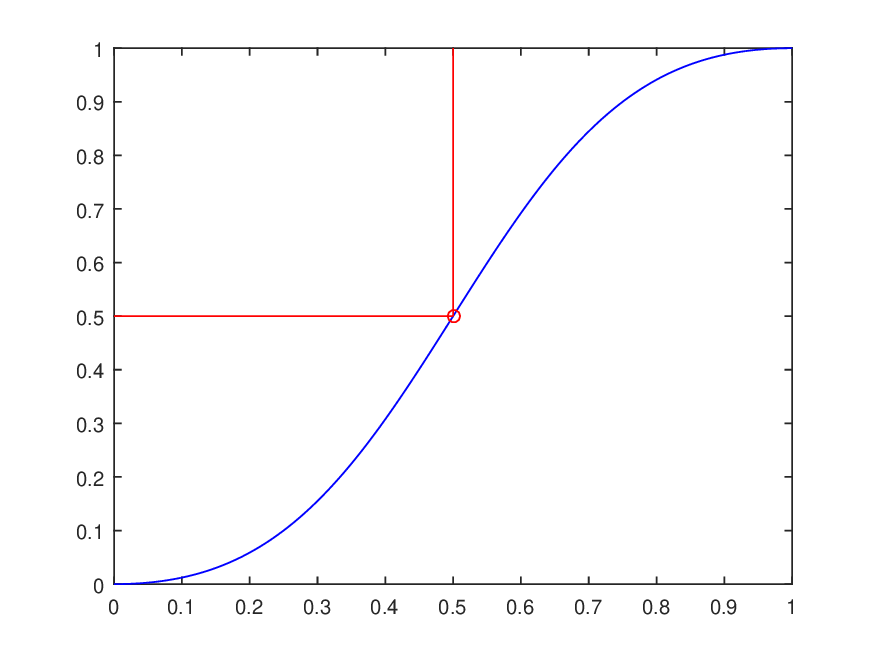}}\\
							\subfloat[Gamma mixture] {\label{gam_cdf}\includegraphics[width=.50\textwidth]{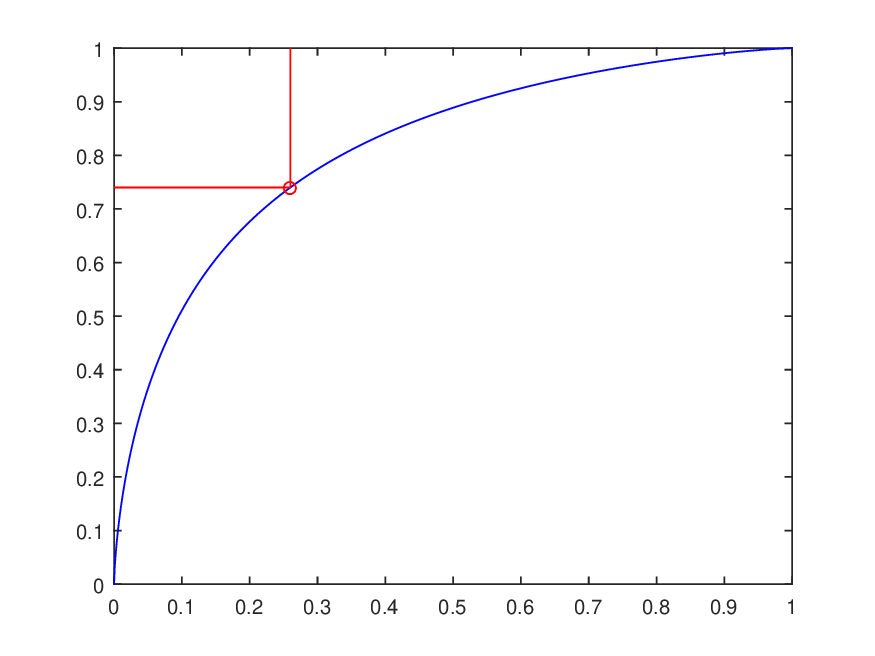}}
							\subfloat[Beta mixture] {\label{bet_cdf}\includegraphics[width=.50\textwidth]{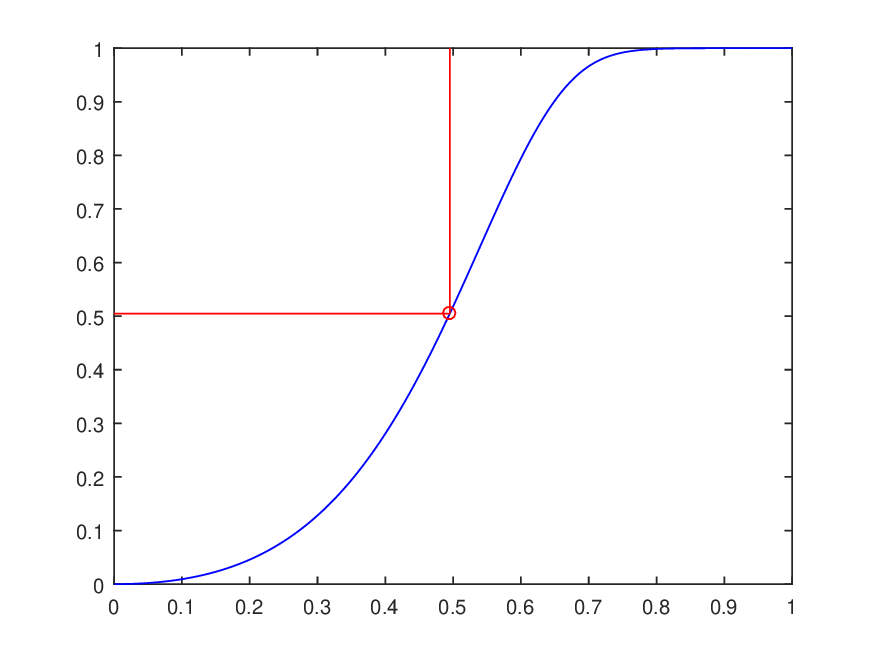}}
\end{figure}

\begin{figure}
    \caption{ Estimations}\label{est}
    \setcounter{subfigure}{0}
		          \subfloat[S\&P 500 index] {\label{sp_fig}\includegraphics[width=1\textwidth]{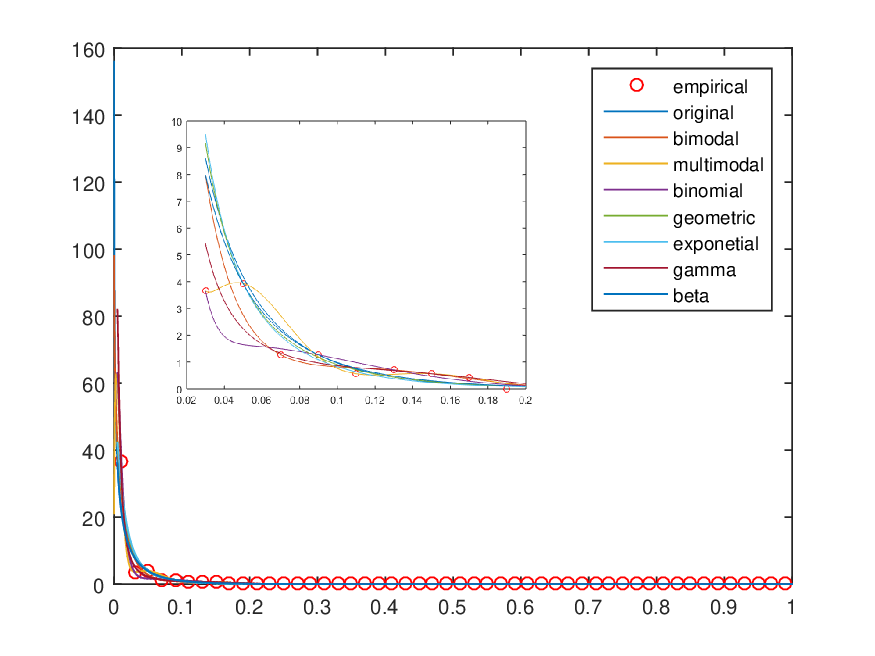}}\\
							\subfloat[Unemployment Insurance Issues] {\label{uii_fig}\includegraphics[width=1\textwidth]{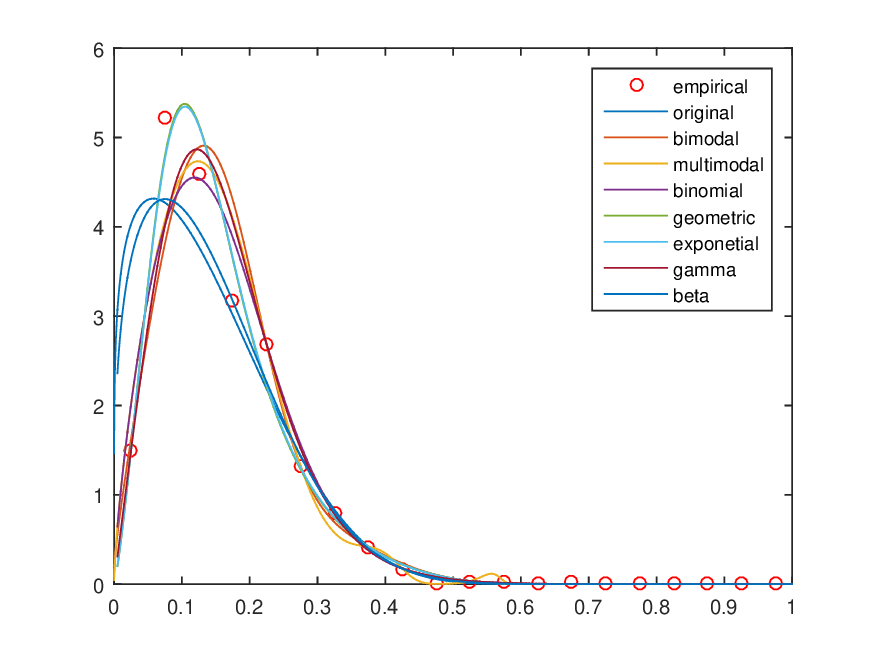}}\\
\end{figure}

\begin{sidewaystable}
\caption{Numerical Estimations}
  \label{para_num}
 \begin{tabular}{c|ccc|c|cccccc}

  \noalign{\smallskip}\hline\noalign{\smallskip}

	S\&P 500    & original& bimodal   & multimodal& model    & binomial &geometric &exponential& gamma                 & beta        \\
	
	\noalign{\smallskip}\hline\noalign{\smallskip}
	
              &  (A1)    &   (A2)    & (A3)     &          &(A4)      &  (A5)    &   (A6)   & (A7)                  & (A8)       \\ 
	
	  \noalign{\smallskip}\hline\noalign{\smallskip}
	$\lambda_1$ &$15.7857$&$40.6027$  &$735.5371$ &$ \beta  $& $1.4975$ & $1.1969$ &  $1.2479$ &$3.2018$               & $0.6653$        \\
  $\lambda_2$ & ---     &$109.2334$ &$920.2421$ & $ a     $&$560.3198$& $20.8690$& $74.2504$ &$732.5631$             & $6.4268$  \\
  $\lambda_3$ & ---     &  ---      &$576.0060$ &$ b     $ &$33.0044$ & $0.0000$ & $14.1627$ &$185.3902$             & $7.2682$    \\   
	$  \beta_1$ &$0.7120$ &$0.8796$   &$1.3843$   &$    p  $ &  $0.0017$& $0.1323$ &   ---     & ---                   & ---           \\
  $  \beta_2$ & ---     &$2.4893$   & $3.9519$  &$    n  $ &  $981$   & ---      &   ---     & ---                   & ---         \\
  $  \beta_3$ & ---     &  ---      & $2.2741$  &$\theta $ & ---      & ---      & $0.4193$  &$1.6238\times 10^{-5}$ & $4.0301\times 10^{-5}$          \\
	$      p_1$ & $1$     &$0.9165$   & $0.7175$  &$ \alpha $& ---      & ---      & ---       &$0.2529$             & $7.3923$   \\
	$      p_2$ & ---     &$0.0835$   & $0.0494$  & ---      & ---      &  ---     & ---       &  ---       ---      & ---   \\  
	$      p_3$ & ---     &  ---      & $0.2331$  & ---      & ---      &  ---     & ---       &   ---       ---     &  ---   \\ 
	
		  \noalign{\smallskip}\hline\noalign{\smallskip}
  
error         &$25.3497$&$22.8935$  &$21.6932$  &          &$23.0744$ & $24.7371$& $25.1232$ &$22.4339$            &$24.5851$     \\	

	\noalign{\smallskip}\hline\noalign{\smallskip}
	
UII          &  (A1)    &   (A2)    & (A3)     &          &(A4)      &  (A5)     &   (A6)    & (A7)                     & (A8)      \\    
	
	  \noalign{\smallskip}\hline\noalign{\smallskip}
	$\lambda_1$ &$7.0550$ &$27.4484$  &$0.0245  $ &$ \beta  $& $1.6951$   & $2.2257$ &  $2.2161$ &$1.9603$               & $1.2819$  \\
  $\lambda_2$ & ---     &$5.0320  $ &$14.9210$  & $ a     $&$3.8136   $ & $1.8680 $& $4.7886$  &$6.9343  $             & $3.9552$  \\
  $\lambda_3$ & ---     &  ---      &$16.9510 $ &$ b     $ &$2.8160   $ & $0.9654$ & $2.0540$  &$0.3263  $             & $4.9723$  \\  
	$  \beta_1$ &$1.1895$ &$1.9935$   &$16.3815$  &$    p  $ &  $0.0334  $& $0.0322$ &   ---     & ---                   & ---           \\
  $  \beta_2$ & ---     &$1.3558 $  & $1.6827$  &$    n  $ &  $95   $   & ---      &   ---     & ---                   & ---         \\
  $  \beta_3$ & ---     &  ---      & $6.0798$  &$\theta $ & ---        & ---      & $0.0869$  &$0.6134              $ & $0.0003$          \\
	$      p_1$ & $1$     &$0.6464$   & $0.0048$  &$ \alpha $& ---        & ---      & ---       &$2.1985$               & $0.0011$   \\
	$      p_2$ & ---     &$0.3536$   & $0.9584$  & ---      & ---        &  ---     & ---       &                       & ---         \\  
	$      p_3$ & ---     &  ---      & $0.0369$  & ---      & ---        &  ---     & ---       &                       & ---          \\ 
	
		  \noalign{\smallskip}\hline\noalign{\smallskip}
  
error         &$7.1362 $&$6.1300 $  &$2.8820 $  &          & $5.6375$   & $5.8140$ &$5.7291 $ &  $5.4682$               &$6.6078$     \\	

		  \noalign{\smallskip}\hline\noalign{\smallskip}					
  \end{tabular}  
\end{sidewaystable}

\end{document}